\newtheorem{theorem}{Theorem}
\newtheorem{lemma}{Lemma}
\newtheorem*{theoremA}{Theorem A}
 \numberwithin{equation}{section}
\newcommand{\be}{\begin{equation}}
\newcommand{\ee}{\end{equation}}
\newcommand{\s}{\sigma}
\newcommand{\g}{\gamma}
\newcommand{\z}{\zeta}
\newcommand{\e}{\epsilon}
\newcommand{\meas}{\mathop{\rm meas}}
\newcommand{\ab}{\big|}
\newcommand{\AB}{\bigg|}
\newcommand{\bOmega}{{\boldsymbol \Omega}}
\newcommand{\vr}{\varrho}
\newcommand{\D}{\Delta}  
\newtheorem*{theorem*}{Theorem}
\def\({\left(}
\def\){\right)}
\newcommand\zpz{\frac{\zeta'}{\zeta}}
\newcommand{\intl}{\int\limits}
\newcommand\bea{\begin{eqnarray}}
\newcommand\eea{\end{eqnarray}}
\newcommand\bi{\begin{itemize}}
\newcommand\ei{\end{itemize}}
\newcommand\ben{\begin{enumerate}}
\newcommand\een{\end{enumerate}}
\newcommand\bes{\begin{equation*}}
\newcommand\ees{\end{equation*}}
\title[On the distribution of the zeros of $\zeta'(s)$]
 {On the distribution of the zeros of the derivative \\
 of the Riemann zeta-function} 
\author{S. J. Lester}
\address{Department of Mathematics, University of Rochester, Rochester, NY 14627 USA}
\curraddr{School of Mathematical Sciences, Tel Aviv University, Tel Aviv 69978, Israel}
\subjclass[2010]{11M06 (primary), 11M26, 11M50 (secondary).}
\thanks{The author was supported in part by the NSF grant DMS-1200582.}
\email{slester@post.tau.ac.il}
\begin{document}

\begin{abstract}
We establish an unconditional asymptotic formula
describing the horizontal distribution of the zeros of the derivative
of the Riemann zeta-function. For $\Re(s)=\s$ satisfying 
$(\log T)^{-1/3+\e} \leq (2\s-1) \leq (\log \log T)^{-2}$,
we show that the number of zeros of $\z'(s)$
with imaginary part between zero and $T$ and real part larger than $\sigma$ 
is asymptotic to $T/(2\pi(\s-1/2))$ as $T \rightarrow \infty$.
This agrees with a prediction from random matrix theory due to Mezzadri.
Hence, for $\s$ in this range the zeros of $\zeta'(s)$
are horizontally distributed like the zeros of the derivative of characteristic polynomials
of random unitary matrices are radially distributed.
\end{abstract}

\maketitle

\section{Introduction}

Let $\z(s)$ denote the Riemann zeta-function with $s=\s+it$ a complex variable. Also, let $\rho=\beta+i\g$
be a non-trivial zero of $\z(s)$ and $\rho'=\beta'+i\g'$ be a non-trivial zero of $\z'(s)$.
Throughout we let $T$ denote a sufficiently large parameter.

The distribution of the zeros of $\z'(s)$ is closely connected 
to the distribution of the zeros of $\z(s)$. 
For instance, it follows from the work of Speiser ~\cite{Sp} 
that the Riemann hypothesis
 is equivalent to $\z'(s)$ not vanishing in the strip $0<\s<1/2$. 
 Levinson and Montgomery ~\cite{LM} 
showed that
\[
\sum_{\substack{0< \g' \leq T \\ 0<\beta'<1/2 }}1=
\sum_{\substack{0< \g \leq T \\ 0<\beta<1/2 }}1+O(\log T);
\]
and, if there are no more than $T/2$ zeros of $\z(s)$ with
$0<\beta<1/2$ and $0<\g\leq T$ then there is a sequence $\{T_n\}_{n=1}^{\infty}$, with $T_n\rightarrow \infty$
as $n \rightarrow\infty$, 
such that the above equation holds with no error term. 
In ~\cite{Le} Levinson showed
that more than $1/3$ of the zeros of $\z(s)$ lie on the critical line $\s=1/2$. His
method relies upon bounding the number of zeros of $\z'(s)$ with $0<\beta'<1/2$
and $0<\g'\leq T$.

There also exists a fascinating relationship between the
zeros of $\z'(s)$ that lie close to the critical line $\s=1/2$
and small gaps between ordinates of zeros of $\z(s)$. This connection was observed by C. R. Guo ~\cite{G} and has been actively studied since (see ~\cite{So}, ~\cite{Z}, ~\cite{F},
~\cite{GY}, ~\cite{K}, ~\cite{FK}, and ~\cite{Ra}).

In this article we are interested in the horizontal distribution of the zeros of $\z'(s)$ in the half-plane $\s>1/2$.
C. R. Guo ~\cite{Gg} also studied this problem.
Let
\[
\chi_p(u,v;\s)=\int_0^1 e\bigg(u \log p \sum_{m=1}^{\infty}
\frac{\cos(2\pi m \theta)}{p^{m\s}}-v\log p 
\sum_{m=1}^{\infty}\frac{\sin(2\pi m \theta)}{p^{m\s}}\bigg) d\theta,
\]
where $e(x)=e^{2\pi i x}$. Next, write
 $\chi(u,v;\s)=\prod_p \chi_p(u,v;\s)$. Also, let $\chi_{\sigma}$
be the partial derivative of $\chi$ with respect to $\sigma$,
and denote the Fourier transform of $\chi_{\sigma}$
by $\widehat \chi_{\sigma}$. Guo proved that for
$ 1/2 +2(\log T)^{-1/600} < \s< 2$, and any $\epsilon>0$ that
\be \label{unweighted}
\begin{split}
\sum_{\substack{0< \g' \leq T \\  \beta'>\s}} 1
=&-\frac{T}{4\pi} \iint_{\mathbb{R}^2} \widehat \chi_{\s} (x, y;\s)  \log(x^2+y^2)  \, dx \, dy\\
&-\frac{\log 2}{2\pi}T
+O(T(\log T)^{-\epsilon}).
\end{split}
\ee

Random matrix analogues of this problem have also been studied.
To state these results, let $U(N)$ be the group of $N \times N$ unitary
matrices. For $U \in U(N)$ write $Z(U,z)$ for the characteristic polynomial
of $U$. Also,
let $\lambda'$ denote a zero of $\frac{d}{dz}Z(U,z)$.
Mezzadri \cite{Me}
proved that as $N \rightarrow \infty$
\be \label{Mezz form}
\mathbb E_{U(N)} \bigg( \frac{1}{N-1} \sum_{ |\lambda'|<1-x/N} 1 \bigg)
\sim \frac1x \qquad \qquad x\rightarrow \infty, 
\quad x=o(N).
\ee
Here the expected
value is with respect to Haar measure on $U(N)$. Additionally, Mezzadri
argued that
\[
\lim_{N \rightarrow \infty}
\mathbb E_{U(N)} \bigg( \frac{1}{N-1} \sum_{ |\lambda'|  \geq 1- x/N } 1 \bigg)
\sim \frac{8}{9 \pi} x^{3/2}
\]
as $x \rightarrow 0^+$ (see also Due\~nez \emph{et al.} ~\cite{DF}).

Analogues for the zeros of 
$\z'(s)$ of these formulas lead to
the following conjectures:
\begin{itemize}
\item[a)] for $\s=1/2+\psi(T)/\log T$, where $\psi(T)=o(\log T)$ and $\psi(T) \rightarrow \infty$
as $T \rightarrow \infty$, we have
\be \label{mezz pred 1}
\sum_{\substack{0 <\g'\leq T \\ \beta'>\s}}1
=\frac{T}{2\pi(\s-\tfrac12)}(1+o(1)) \qquad (T \rightarrow \infty);
\ee
\item[b)] for $\nu\rightarrow 0^+$, we have
\[
\lim_{T \rightarrow \infty}
\frac{1}{N'(T)}
\sum_{\substack{0< \g' \leq T \\ \beta'<\tfrac12+\nu/\log T}}1\sim\frac{8}{9 \pi} \nu^{3/2},
\]
\end{itemize}
where
\be \label{sum zetap zeros}
N'(T)=\sum_{0< \g' \leq T} 1=\frac{T}{2\pi} \log \frac{T}{2\pi }
-(1+\log 2) \frac{T}{2\pi}+O(\log T)
\ee
(see Berndt ~\cite{Be}). 
Under the assumption of the Riemann hypothesis
and Montgomery's pair correlation conjecture 
M. Radziwi\l\l
~\cite{Ra} has shown that
\[
\nu^{3/2+\varepsilon} \ll
\liminf_{T \rightarrow \infty}
\frac{1}{N'(T)}
\sum_{\substack{0< \g' \leq T \\ \beta'<\tfrac12+\nu/\log T}}1\ll \nu^{3/2-\varepsilon},
\]
for any $\varepsilon>0$ as $\nu \rightarrow 0^+$.

In this article we derive an explicit, unconditional formula for the sum on the left-hand side
 of \eqref{unweighted}. Moreover, we show that \eqref{mezz pred 1} holds
for $\s\in(1/2+(\log \log T)^2/(\log T)^{1/3}, 1/2+1/(\log \log T)^2)$. Therefore, by setting $\sigma=1/2+x/\log T$ in this formula and 
taking $N=\lfloor \log T \rfloor$ in \eqref{Mezz form} we see that
\[
\frac{1}{N'(T)} \sum_{\substack{0 <\g'\leq T \\ \beta'>\frac12+x/\log T}}1
\sim \mathbb E_{U(N)} \bigg( \frac{1}{N-1} \sum_{ |\lambda'|<1-x/N} 1 \bigg) \qquad \qquad (T \rightarrow \infty)
\]
for $(\log \log T)^2 (\log T)^{2/3} \leq x \leq \log T/(\log \log T)^2$.
Thus, in this range
the zeros of $\z'(s)$
are  horizontally distributed 
like the zeros of the derivative of characteristic polynomials
of random unitary matrices are radially distributed. 
\section{Main results}

\begin{theorem} \label{main est} Let $\psi(T)=(2\s-1)\log T$ and $\mathcal L_2=\log \log T$.
For \\
 $  \mathcal L_2/(\log T)^{1/3} \leq (2\s-1) \leq 1/(10 \mathcal L_2)$ we have
\bes
\begin{split}
\int_0^T \! \log \AB \zpz\(\s+it\)  \AB \, dt=T \log \Big(\frac{1}{2\s-1}\Big)-\frac{\g}{2} T
+O\(T E(\psi(T)) \),
\end{split}
\ees
where $\g=0.57721566...$ is Euler's constant and
\[
E(\psi(T)) =
\begin{cases}
\displaystyle  \frac{\mathcal L_2^3 (\log T)^{3/2}}{\psi(T)^{9/4}} & 
\text{if   } (\log T)^{2/3} \mathcal L_2 \leq \psi(T) 
\leq \mathcal L_2^{4/17}(\log T)^{14/17}, \\
\displaystyle  \frac{\mathcal L_2^{2} \psi(T)^2}{\log^2 T} & 
\text{if   } \mathcal L_2^{4/17} (\log T)^{14/17}  \leq \psi(T) 
\leq \log T/(10 \mathcal L_2).
\end{cases}
\]
\end{theorem}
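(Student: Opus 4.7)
The plan is to establish the asymptotic by approximating $\frac{\zeta'}{\zeta}(\s+it)$ by a short Dirichlet polynomial $P(t)=-\sum_{n\le X}\Lambda(n)/n^{\s+it}$ on average in $t$, and then evaluating $\int_0^T\log|P(t)|\,dt$ by a moment calculation in the spirit of Selberg. The first ingredient is Selberg's explicit formula for the logarithmic derivative, giving
\[
\frac{\zeta'}{\zeta}(\s+it)=-\sum_{n\le X}\frac{\Lambda(n)}{n^{\s+it}}+R(\s+it;X),
\]
with $X$ a small power of $T$, where the remainder $R$ collects the pole at $s=1$ and a sum over non-trivial zeros of $\zeta$. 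Unconditional zero-density bounds of Ingham--Huxley type control the mean square of $R$, and the lower hypothesis $(2\s-1)\ge\mathcal L_2/(\log T)^{1/3}$ provides just enough of a buffer from the critical line for these zero sums to stay under control.

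The main term comes from the variance
\[
V(\s)=\sum_{n\le X}\frac{\Lambda(n)^2}{n^{2\s}}=\frac{1}{(2\s-1)^2}+O\!\Bigl(\frac{1}{2\s-1}\Bigr)
\]
of $P$, by thinking of $P(t)$ as approximately complex Gaussian in $t$. For a complex Gaussian $Z$ with $\mathbb E|Z|^2=V$, a Frullani computation yields $\mathbb E\log|Z|=\tfrac12\log V-\tfrac{\gamma}{2}$, which reproduces the asserted main term $T\log\tfrac{1}{2\s-1}-\tfrac{\gamma}{2}T$. To make this rigorous I would compute the $2k$-th moments
\[
\int_0^T|P(t)|^{2k}\,dt=T\,k!\,V(\s)^k\bigl(1+o(1)\bigr)
\]
for $k$ in a range as large as Dirichlet polynomial mean-value theorems allow, via Montgomery--Vaughan together with the combinatorics of $k$-fold $\Lambda$-convolutions, and then pass from moments to the logarithm through the Frullani identity
\[
\log|z|=\tfrac12\int_0^\infty\bigl(e^{-u}-e^{-u|z|^2}\bigr)\frac{du}{u},
\]
splitting the $u$-integral at a suitable cutoff so that small $u$ is handled by the variance and large $u$ is absorbed by tail bounds from the high moments.

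The main obstacle, and the source of the piecewise error term $E(\psi(T))$, is the trade-off between these two error sources. Shortening $X$ simplifies the moment calculation but weakens the approximation $\zeta'/\zeta\approx P$; lengthening $X$ reverses the trade-off. The breakpoint $\psi(T)=\mathcal L_2^{4/17}(\log T)^{14/17}$ is precisely where the optimized errors from the two steps cross over. In the regime closer to the critical line, the Gaussian-moment and tail error from evaluating $\int\log|P|\,dt$ dominates after optimizing $X$, giving $\mathcal L_2^3(\log T)^{3/2}/\psi^{9/4}$; in the regime farther from it, the Dirichlet polynomial approximation error dominates, giving $\mathcal L_2^2\psi^2/\log^2 T$. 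The lower endpoint $(2\s-1)\ge\mathcal L_2/(\log T)^{1/3}$ is essentially the furthest point to which this balancing argument can be pushed before the unconditional zero-density input ceases to be effective.
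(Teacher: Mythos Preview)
Your approach shares the paper's central heuristic---that $\zeta'/\zeta(\sigma+it)$ behaves like a complex Gaussian of variance $V\sim\tfrac12(2\sigma-1)^{-2}$, so that $\mathbb{E}\log|\zeta'/\zeta|\approx\tfrac12\log V-\tfrac{\gamma}{2}$---but the execution is genuinely different and contains a gap.

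The paper does not work with moments or the Frullani identity. Instead it writes the integral as a Lebesgue--Stieltjes integral against the distribution function
\[
\Psi_T(r)=\frac1T\meas\Big\{t\in(0,T):\,|\zeta'/\zeta(\sigma+it)|\le\sqrt{V}\,r\Big\},
\]
and evaluates $\int\log r\,d\Psi_T(r)$ using a previously established distributional result (Theorem~A) asserting $\Psi_T(r)\approx 1-e^{-r^2/2}$ together with a small-ball estimate $\Psi_T(r)\ll r^2$. The integral is split into three pieces, $I_\infty$ (where $|\zeta'/\zeta|$ is very large, near zeros of $\zeta$), $I_M$ (the main range), and $I_0$ (where $|\zeta'/\zeta|$ is very small, near zeros of $\zeta'$). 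The piece $I_0$ is the most delicate and is controlled by Jensen's formula and the Borel--Carath\'eodory theorem applied to $(-2^s/\log 2)\zeta'/\zeta$, bounding the contribution from zeros $\varrho'$ of $\zeta'/\zeta$ near the line $\Re s=\sigma$.

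The gap in your proposal is precisely here. You write that ``large $u$ is absorbed by tail bounds from the high moments,'' but in the Frullani representation large $u$ probes \emph{small} values of $|P(t)|$: the term $e^{-u|P|^2}$ survives for large $u$ exactly when $|P|$ is near zero, and this produces the divergence of $\log|P|$ at zeros of $P$. High moments $\int_0^T|P|^{2k}\,dt$ control large deviations of $|P|$ upward, not downward; they give no anticoncentration. You would need a small-ball estimate of the type $\meas\{t:|P(t)|\le r\sqrt{V}\}\ll Tr^2$, and more seriously you need it for $\zeta'/\zeta$ itself, since the passage from $\log|P|$ to $\log|\zeta'/\zeta|$ is uncontrolled by an $L^2$ bound on $R=\zeta'/\zeta-P$ precisely where either function is small. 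The paper's $I_0$ argument is doing this work, and it is the source of the error term $\mathcal L_2^3(\log T)^{3/2}/\psi^{9/4}$ in the first regime---not, as you suggest, a trade-off in the Dirichlet polynomial length $X$.
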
  

Note that $E(\psi(T))=E((2\sigma-1)\log T) =o(1)$ as $T \rightarrow \infty$ for
$ (\log \log T)^2/(\log T)^{1/3} \leq (2\s-1) \leq 1/(\log \log T)^2$.

Using Littlewood's Lemma (see Section \ref{section 4})
and applying the previous result
we then prove
\begin{theorem} \label{thm 1}
Let $\psi(T)=(2\s-1)\log T$ and $\mathcal L_2=\log \log T$.
 For \\
 $ \mathcal L_2/(\log T)^{1/3} \leq (2\s-1) \leq 1/(10\mathcal L_2)$ we have
\be \label{first result}
\begin{split}
 2 \pi \sum_{\substack{  0< \g' \leq T \\  \beta'>\s}} (\beta'-\s)=T \log \Big( \frac{1}{2\s-1}\Big)+\Big(\log \frac{2^{\s}}{\log 2}-\frac{\g}{2}\Big) T+O(TE(\psi(T))),
\end{split}
\ee
where $\g=0.57721566...$ is Euler's constant and $E(\psi(T))$
is as in Theorem \ref{main est}.
\end{theorem}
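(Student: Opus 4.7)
The plan is to apply Littlewood's lemma (in its form for meromorphic functions) to the normalized logarithmic derivative
\[
G(s):=-\frac{2^s}{\log 2}\cdot\frac{\zeta'(s)}{\zeta(s)}
\]
on the rectangle $R=[\sigma,A]\times[T_0,T]$, where $A$ is a large fixed constant, $T_0>0$ lies below the lowest non-trivial zeros of $\zeta$ and $\zeta'$, and $T$ is perturbed by $O(1)$ so that no zeros sit on the horizontal edges. The zeros of $G$ in $R$ are precisely the zeros of $\zeta'$ in $R$, while its poles are the zeros of $\zeta$ in $R$ (the simple pole of $\zeta'/\zeta$ at $s=1$ lies below $R$ and so does not contribute). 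Littlewood's lemma gives
\bes
2\pi\sum_{\substack{0<\gamma'\leq T\\\beta'>\sigma}}(\beta'-\sigma)-2\pi\sum_{\substack{0<\gamma\leq T\\\beta>\sigma}}(\beta-\sigma)=\int_{T_0}^T\log|G(\sigma+it)|\,dt-\int_{T_0}^T\log|G(A+it)|\,dt+\mathcal{A},
\ees
where $\mathcal{A}$ collects the two horizontal $\arg$-integrals.

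Since $\log|G(\sigma+it)|=\log|\zeta'/\zeta(\sigma+it)|+\sigma\log 2-\log\log 2$, the left-edge integral equals $\int_0^T\log|\zeta'/\zeta(\sigma+it)|\,dt+T(\sigma\log 2-\log\log 2)+O(1)$; Theorem~\ref{main est} then evaluates the $\zeta'/\zeta$-integral and produces the desired main term
\[
T\log\tfrac{1}{2\sigma-1}+T\bigl(\log(2^{\sigma}/\log 2)-\tfrac{\gamma}{2}\bigr)+O(T E(\psi(T))).
\]
The normalization by $2^s/\log 2$ is chosen precisely so that the Dirichlet series for $\zeta'/\zeta$ yields $G(A+it)=1+O((2/3)^A)$; thus both the right-edge vertical integral and the right-edge contribution to $\mathcal{A}$ are $O(T(2/3)^A)$, negligible for $A$ large, while the horizontal $\arg$-integrals are controlled by the classical bounds $\arg\zeta(\sigma+iT),\arg\zeta'(\sigma+iT)\ll\log T$ at the generic height $T$, contributing $O(\log T)$.

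The remaining task is to show that the subtracted $\zeta$-zero sum is absorbed into $O(TE(\psi(T)))$. Writing $\sum_{0<\gamma\leq T,\beta>\sigma}(\beta-\sigma)=\int_\sigma^1 N(u,T)\,du$ and inserting Ingham's unconditional density estimate $N(u,T)\ll T^{3(1-u)/(2-u)}(\log T)^C$ at $\sigma=\tfrac12+\psi(T)/(2\log T)$ yields a bound of $T\exp\bigl(-\tfrac{2}{3}\psi(T)(1+o(1))\bigr)(\log T)^C$. Because $\psi(T)\geq\mathcal L_2(\log T)^{2/3}\to\infty$, a routine comparison confirms this is far smaller than $TE(\psi(T))$ throughout the range $\mathcal L_2/(\log T)^{1/3}\leq 2\sigma-1\leq 1/(10\mathcal L_2)$.

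The main obstacle is not conceptual but the careful bookkeeping of the argument integrals: one must maintain a consistent branch of $\arg G$ around $\partial R$ so that the linear-in-$A$ pieces of the vertical integral at $\Re(s)=A$ cancel exactly against their counterparts in $\mathcal{A}$ (the point of normalizing by $2^s/\log 2$), and one must verify at the density-estimate step that Ingham's bound really does beat $E(\psi(T))$ uniformly. Given Theorem~\ref{main est}, the rest of the argument is standard and assembles to \eqref{first result}.
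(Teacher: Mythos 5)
Your overall strategy coincides with the paper's: Littlewood's lemma reduces the weighted zero-sum to $\int_0^T\log|\zeta'/\zeta(\sigma+it)|\,dt$ plus the term $T\log(2^{\sigma}/\log 2)$, Theorem \ref{main est} supplies the main term, and an unconditional zero-density estimate absorbs the contribution of the zeros of $\zeta$. The two cosmetic differences are that you apply the meromorphic form of Littlewood's lemma once to $G(s)=-(2^s/\log 2)\,\zeta'(s)/\zeta(s)$, whereas the paper applies the holomorphic form twice (to $-(2^s/\log 2)\zeta'(s)$ and, via Theorem 9.15 of Titchmarsh, to $\zeta(s)$) and then differences; and you invoke Ingham's density theorem where the paper uses Selberg's bound $N(u,T)\ll T^{1-(u-1/2)/4}\log T$. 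Both density bounds comfortably beat $E(\psi(T))$ throughout the stated range, so these choices are immaterial.

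There is, however, one step that fails as written: the right-hand vertical edge. You take $A$ to be a \emph{fixed} large constant and bound $\int_{T_0}^T\log|G(A+it)|\,dt$ by $O(T(2/3)^A)$ using only the pointwise estimate $G(A+it)=1+O((2/3)^A)$. For fixed $A$ this is a positive constant multiple of $T$, which is \emph{not} absorbed by $O(TE(\psi(T)))$ --- in the range where the theorem has content one has $E(\psi(T))=o(1)$. The standard repair (the one the paper alludes to with ``a standard argument using contour integration'') is to observe that for $\Re s$ large, $\log G(s)$ is an absolutely convergent generalized Dirichlet series $\sum_k b_k\mu_k^{-s}$ with frequencies $\mu_k\geq 3/2$, so integrating its real part term by term in $t$ gives $\int_{T_0}^T\log|G(A+it)|\,dt=O(1)$; alternatively, let $A\to\infty$ for each fixed $T$, checking that the horizontal $\arg$-integrals converge (their integrands are $O((2/3)^{\alpha})$ for large $\alpha$) and remain $O(\log T)$. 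With that correction your argument assembles exactly as in the paper.
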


To calculate the horizontal distribution of 
the zeros of $\zeta'(s)$
we remove the weight $(\beta'-\s)$ from 
the sum on the left-hand side of \eqref{first result}. To do this,
observe that
\[
 \sum_{\substack{ 0< \g' \leq T \\  \beta'>\s}}( \beta'-\s)=\int_{\s}^{\infty} \sum_{ \substack{0< \g' \leq T \\  \beta'>u}} 1 \, du.
\]
We essentially differentiate
\eqref{first result} (the precise argument will be given
in Section \ref{section 5}) and prove
\begin{theorem} \label{thm 2.1}
Let $\psi(T)=(2\s-1)\log T$ and $\mathcal L_2=\log \log T$.
For \\
$  \mathcal L_2^2/(\log T)^{1/3} \leq (2\s-1) \leq 1/(20 \mathcal L_2)$ we have
\be \label{dist of zeros}
\sum_{\substack{ 0< \g' \leq T \\  \beta'>\s}} 1=\frac{T}{2\pi(\s-\tfrac12)}\Big(1+O\Big(\sqrt{ E (\psi(T))}\Big)\Big),
\ee
where $E(\psi(T))$ is as in Theorem \ref{main est}.
\end{theorem}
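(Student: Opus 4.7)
The plan is to recover $N(\sigma):=\sum_{0<\gamma'\le T,\,\beta'>\sigma}1$ from the weighted sum $F(\sigma):=\sum_{0<\gamma'\le T,\,\beta'>\sigma}(\beta'-\sigma)$ evaluated in Theorem \ref{thm 1} by a finite-difference argument. Since $F(\sigma)=\int_\sigma^\infty N(u)\,du$ and $N$ is non-increasing in $\sigma$, for any $h>0$ one has the sandwich
\[
\frac{F(\sigma)-F(\sigma+h)}{h}\;\le\;N(\sigma)\;\le\;\frac{F(\sigma-h)-F(\sigma)}{h},
\]
and I would apply Theorem \ref{thm 1} at the three arguments $\sigma-h,\sigma,\sigma+h$ with $h$ to be chosen much smaller than $2\sigma-1$.

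A Taylor expansion of $\log\frac{1}{2\sigma'-1}$ about $\sigma'=\sigma$, combined with the linear contribution of the $(\log(2^\sigma/\log 2)-\gamma/2)T$ term in Theorem \ref{thm 1}, yields
\[
\frac{F(\sigma-h)-F(\sigma)}{h}=\frac{T}{2\pi(\sigma-\tfrac12)}-\frac{T\log 2}{2\pi}+O\!\Big(\frac{Th}{(2\sigma-1)^2}\Big)+O\!\Big(\frac{T\,E(\psi(T))}{h}\Big),
\]
and a symmetric expression bounds $N(\sigma)$ from below. The secondary $-\frac{T\log 2}{2\pi}$ term is a relative error of size $(2\sigma-1)$, and a direct check in each branch of the piecewise definition of $E$ shows $(2\sigma-1)=o(\sqrt{E(\psi(T))})$ throughout the range assumed here, so this contribution is absorbed into the claimed error. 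Balancing the two remaining errors by the choice
\[
h=(2\sigma-1)\sqrt{E(\psi(T))}
\]
renders each of them $O\!\big(T\sqrt{E(\psi(T))}/(\sigma-\tfrac12)\big)$, matching the required bound.

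The main bookkeeping obstacle is to verify that the perturbed arguments $\sigma\pm h$ remain inside the admissible range of Theorem \ref{thm 1} and that $E(\psi)$ evaluated at the perturbed point stays comparable to $E(\psi(T))$. The tightened hypotheses $\mathcal L_2^2/(\log T)^{1/3}\le 2\sigma-1\le 1/(20\mathcal L_2)$ (in place of $\mathcal L_2/(\log T)^{1/3}\le 2\sigma-1\le 1/(10\mathcal L_2)$ used in Theorem \ref{thm 1}) are exactly what is required: since $h=o(2\sigma-1)$, the quantity $2\sigma-1$ is perturbed by a factor $1+o(1)$, keeping $\sigma\pm h$ strictly inside the range, while $\psi$ changes by a factor $1+O(\sqrt{E(\psi(T))})$. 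One then checks in each piece of the piecewise formula for $E$ separately, and at the transition point $\psi=\mathcal L_2^{4/17}(\log T)^{14/17}$ where the two pieces match, that this small perturbation of $\psi$ alters $E(\psi)$ by at most a factor $1+o(1)$; this is the place where the sandwich bounds acquire the $\sqrt{E(\psi(T))}$ (rather than $E(\psi(T))$) loss seen in the statement.
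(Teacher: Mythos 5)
Your proposal is correct and follows essentially the same route as the paper: the same sandwich of $N(\sigma)$ between forward and backward difference quotients of the weighted sum from Theorem \ref{thm 1}, the same Taylor expansion producing the $-T\log 2/(2\pi)$ secondary term (absorbed since $2\sigma-1=o(\sqrt{E})$), and the same balancing choice $\Delta=(2\sigma-1)\sqrt{E(\psi(T))}$. The only quibble is expository: the $\sqrt{E}$ (rather than $E$) loss comes from balancing $T\Delta/(2\sigma-1)^2$ against $TE/\Delta$, not from the comparability of $E$ at the perturbed arguments, which is a separate (and easily verified) point.
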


\section{The Proof of Theorem \ref{main est}}
\subsection{The approach to the proof of Theorem \ref{main est}}
For $2\s-1 \geq 1/\log T$ let
\be\notag
V=\frac12 \sum_{n=2}^{\infty} \frac{\Lambda^2(n)}{n^{2\s}}=\frac{1}{2(2\s-1)^2}+O(1).
\ee
In particular,
\be\label{V bd}
V\ll \log^2 T.
\ee

A main component in the proof of Theorem \ref{main est}
is the following
result on the distribution of $\z'/\z(\s+it)$
that was proved by the author in ~\cite{L} (see also ~\cite{lester arxiv}) .
We state this as
\begin{theoremA} \label{theorem disk}
Let $\psi(T)=(2\s-1) \log T$ and
\[
\bOmega= e^{-10} \min\big(V^{3/2}, (\psi(T)/ \log \psi(T))^{1/2}\big).  
\]
Suppose that $\psi(T)\to \infty$ with $T$,  $\psi(T)=o(\log T)$,
 and that  $r$ is a real number such that $r \bOmega \geq 1$.
Then we have
\be \label{main measure}
\meas \bigg\{ t \in (0, T) : \bigg|\frac{\z'}{\z}(\s+it)  \bigg| \leq \sqrt V r  \bigg \}
=  T(1-e^{-r^2/2}) +O\(T\(\frac{r^2+r}{\bOmega}\)\),
\ee
where $\meas$ refers to Lebesgue measure.
If, in addition, we let 
\be \label{e def}
\varepsilon= 
\max\big( e^{\s/(1-2\s)}/(2\s-1),e^{10}(\log \psi(T)/\psi(T) )^{1/2}\big)
\ee
then we have
for $r\geq \varepsilon$ that  
\be \label{small measure}
\meas\bigg\{ t \in (0, T) : \bigg|\frac{\z'}{\z}(\s+it) V^{-1/2} \bigg| \leq r \bigg\} 
\ll T r^2.
\ee
 \end{theoremA}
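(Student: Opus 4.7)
The plan is to prove Theorem A by the method of moments applied to a short Dirichlet-polynomial approximation of $\zeta'/\zeta(\sigma+it)$, with the distribution function extracted by Fourier inversion against a bivariate Gaussian.

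First I would establish a pointwise approximation
\[
\frac{\zeta'}{\zeta}(\sigma+it) = - \sum_{n\leq X} \frac{\Lambda(n) w(n)}{n^{\sigma+it}} + R(t),
\]
valid for $t \in (0,T)$ outside an exceptional set of measure $o(T/\bOmega)$, with $w$ a smooth cutoff and $X$ a small power of $T$. The natural route is the partial-fraction expansion $\zeta'/\zeta(s)=\sum_{|\gamma-t|\leq 1}(s-\rho)^{-1}+O(\log T)$ combined with a Perron-type integral against a Mellin pair $(w,\widetilde w)$. The exceptional set consists of those $t$ for which $\zeta$ has an unusually dense cluster of nontrivial zeros near $\sigma+it$, and its measure is controlled unconditionally by a zero-density estimate (of Huxley--Ivi\'c type) together with the Vinogradov--Korobov zero-free region; unconditionality is essential because $\sigma-1/2$ is smaller than any negative power of $\log T$.

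Next I would compute the joint moments of $A(t) = \operatorname{Re} D(t)$ and $B(t) = \operatorname{Im} D(t)$, where $D(t) = -\sum_{n\leq X} \Lambda(n) w(n)/n^{\sigma+it}$. Expanding $A(t)^a B(t)^b$ multinomially and applying $\int_0^T (m/n)^{it}\,dt = T\mathbf 1_{m=n}+O(1+|\log(m/n)|^{-1})$, the diagonal terms reproduce the Gaussian moment formula
\[
\frac{1}{T}\int_0^T A(t)^a B(t)^b \, dt = \mathbb E\bigl[\mathcal X^a \mathcal Y^b\bigr] + (\mathrm{error}),
\]
with $(\mathcal X,\mathcal Y)$ independent centered Gaussians of variance $V$. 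The maximum order $2k$ at which one can match Gaussian moments before off-diagonal mass takes over is precisely what sets $\bOmega$: the $V^{3/2}$ regime arises from balancing the diagonal contribution $k! \, V^k$ against the off-diagonal bound $\ll X^{2k}/T$, while the $(\psi(T)/\log\psi(T))^{1/2}$ regime enters once $X$ has to be shrunk to keep the error from Step~1 negligible.

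From the moment data, \eqref{main measure} follows by a Fourier-inversion / Beurling--Selberg argument: the radial indicator $\mathbf 1_{|z|\leq \sqrt V r}$ is majorised and minorised by polynomials of degree $\lesssim k$ in $|D|^2/V$, and applying the previous step term by term yields the main contribution $T(1-e^{-r^2/2})$, the CDF of the modulus of a standard complex Gaussian, with the stated error $T(r^2+r)/\bOmega$ coming out of the matched moments. The upper bound \eqref{small measure} for $r\geq\varepsilon$ is softer: a Markov-type estimate $\meas\{|D|\leq \sqrt V r\} \leq (Vr^2)^{-1}\int_0^T \min(|D(t)|^2,Vr^2)\,dt$ combined with the second-moment asymptotic $\int_0^T |D(t)|^2\,dt \sim 2VT$ already gives $\ll Tr^2$ once $r$ exceeds the threshold $\varepsilon$ at which the Dirichlet-polynomial approximation is swallowed. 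The principal obstacle is the simultaneous calibration of $X$, the number of matched moments, and the zero-density input: $X$ must be small enough that Gaussian behaviour persists up to order $\asymp V$ (this produces the sharp $V^{3/2}$ exponent in $\bOmega$), yet large enough that $R(t)$ is uniformly $o(\sqrt V /\bOmega)$ off the exceptional set, and this optimisation has to run uniformly in the very narrow range of $\sigma$ allowed by Theorem A.
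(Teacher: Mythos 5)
Your sketch cannot be compared line-by-line with a proof in this paper, because the paper does not prove Theorem A at all: it is imported as an external input, established in the author's thesis \cite{L} and the companion preprint \cite{lester arxiv}, and only its two conclusions \eqref{main measure} and \eqref{small measure} are used here. Judged against the general method of that cited work, your architecture (an unconditional Dirichlet-polynomial approximation to $\z'/\z(\s+it)$, Gaussian moment computations, then a passage to the distribution function) is the right family of ideas, but one input is misaimed: a Huxley--Ivi\'c type density theorem and the Vinogradov--Korobov zero-free region say nothing useful at $\s-\tfrac12\asymp\psi(T)/\log T$, which is smaller than any fixed power of $\log T$; the exceptional set this close to the critical line is controlled by Selberg's density estimate \eqref{zero density} and Selberg's machinery for $\z'/\z$ near $\s=1/2$, exactly the unconditional input used elsewhere in this paper.

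The genuine gap is your treatment of \eqref{small measure}. The displayed ``Markov-type estimate'' runs in the wrong direction: on the set $\{t:|D(t)|\le\sqrt V r\}$ one has $\min(|D(t)|^2,Vr^2)=|D(t)|^2\le Vr^2$, so the truncated second moment does not dominate $Vr^2$ times the measure of that set, and no inequality of that shape can hold. More fundamentally, \eqref{small measure} is an anti-concentration (small-ball) statement: the asymptotic $\int_0^T|D(t)|^2\,dt\sim 2VT$ is perfectly consistent with $D(t)=0$ on a set of measure $\gg T$, so no second-moment information can yield a bound $\ll Tr^2$ for small $r$. Nor can you fall back on \eqref{main measure}: when $\bOmega=e^{-10}V^{3/2}$ there is in general a range $\varepsilon\le r<1/\bOmega$ in which \eqref{main measure} is not applicable, and this is precisely the range in which \eqref{small measure} is invoked in this paper (for $\meas(\mathcal M_0)\ll T\varepsilon^2$ and for $\Psi_T(r)$ with $r$ near $\varepsilon$). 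A correct argument needs a bound on the density near the origin of (an approximation to) the distribution of $\z'/\z(\s+it)V^{-1/2}$, uniformly down to the scale $\varepsilon$, e.g. via decay and integrability of the two-dimensional characteristic function of the random Euler-product model; that is also the only plausible source of the first term $e^{\s/(1-2\s)}/(2\s-1)$ in \eqref{e def}, which your sketch never explains. (A secondary concern: deducing \eqref{main measure} with an error uniform in $r$ is better done by an Esseen-type smoothing inequality on the characteristic function than by polynomial majorants of the radial indicator, whose fixed degree will not deliver $O(T(r^2+r)/\bOmega)$ uniformly for large $r$.)
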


Our goal is to estimate
\[
\int_0^T \! \log \AB \zpz\(\s+it\) V^{-1/2} \AB \, dt.
\]
We begin by writing
\bes
\int_0^T \log \AB \zpz(\s+it) V^{-1/2} \AB \, dt=I_0+ I_M+I_{\infty},
\ees
where $I_0$ is the portion of the integral over the set where $|\z'/\z(\s+it) V^{-1/2}|$ is less than 
$\varepsilon$ ($\varepsilon$ is defined in \eqref{e def}),
$I_{\infty}$ is the portion of the integral where $|\z'/\z(\s+it) V^{-1/2} |$ is greater than 
\be \label{def N}
\mathcal N=cV^{-1/2} \log^2 T  
\ee
with $c>0$ to be specified below,
and $I_M$ is the rest.

In the next section we will estimate $I_{\infty}$. To do this, first note 
that for $10< t< T$, unless there is a zero of $\z(s)$ within a distance of $1/\log T$ 
of $s$, we have $|\z'/\z(s)| \leq c_1  \log^2 T$
for some absolute constant $c_1>0$.
This   follows from the well-known formulas
\be \label{landau's formula}
\zpz(s)=\sum_{|t-\g| \leq 1} \frac{1}{s-\rho}+O\big(\log t\big)
\ee
and $N(t+1)-N(t) \ll \log t$, where $N(t)$ is the number
of zeros of $\z(s)$ with $0<\gamma\leq t$ (see Titchmarsh ~\cite{T}, Chapter IX).  In the definition
of $\mathcal N$ in \eqref{def N} we now choose $c=c_1$. Using Selberg's~\cite{Se} zero density estimate
\be\label{zero density}
N(\s, T) =\sum_{\substack{0< \g \leq T \\  \beta>\s}} 1 \ll T^{1-(\s-1/2)/4}\log T,
\ee
we will then show that the measure of the set where $|\z'/\z(s)|V^{-1/2}$ is greater than $\mathcal N$ is quite small.
Combining this estimate with the crude bound
\[
\int_0^T \bigg(\log \AB\zpz(\s+it)\AB \bigg)^2 \, dt \ll T \log^2 T ,
\]
which we prove later, we then obtain a bound on $I_{\infty}$.

We next consider $I_{0}$.
The estimation of $I_0$ requires a more complicated argument.
There are essentially two main steps.
First, we use a few lemmas from complex analysis  to obtain a
pointwise lower bound on $\log |\z'/\z(s)|$ in terms of a sum
over the zeros of $\z'/\z(s)$ and the maximum value of
$|\z'/\z(s)|$ in a disk that does not contain a zero of $\z(s)$. 
We then use \eqref{small measure}
to obtain a bound on the set of $t$ between zero and $T$ for which $\z'/\z(\s+it)$ is small. 

Finally, to evaluate $I_M$  let
\[
\Psi_T(r)= \frac1T
\meas \bigg\{ t \in (0, T) : 
\bigg|\frac{\z'}{\z}(\s+it) V^{-1/2} \bigg| \leq  r  \bigg \}.
\]
Using the
 change of variable formula (see ~\cite{Dur} Theorem 1.6.9) we
observe that
\[
I_M= \int_{[\e,\mathcal N]} \! \log r \, \, d\Psi_T(r).
\]
We then use \eqref{main measure} to evaluate the integral on the right-hand side.

\subsection{Preliminary Lemmas} \label{complex anal}

Let $D_r(z)$ 
be the closed disk of radius $r$ centered at $z$.
We now consider functions $F(s)$ such that:
 \bi
 \item[i)] 
   $F(s)$ is analytic in $D_R(\s_0+it)$, where $\s_0 \in \mathbb{R}$ and $R>0$;
 \item[ii)]  $C \leq |F(s)| \leq 1/C $ in $ D_r(\s_0+it)$, where $r < R$ and $0< C < 1$. 
 \ei
Next, let $\vr$ denote a zero of $F(s)$ and let
 \[
 N_t( R)=\sum_{\vr \in D_{ R}(\s_0+it) } 1, 
 \]
 where each zero is counted according to its multiplicity. Also, set
 $s_0=\s_0+it$,
 and let $M_t( R)=\max_{|z-s_0| \leq  R } |F(z)|+2$.
\begin{lemma} Suppose that $F(s)$ satisfies conditions $ i)$ and $ ii)$ above
and that $R$ is bounded above by $A$ for some $A>1$.
Then for $0< \delta < R$ we have
\be\label{zero bd}
N_t(R-\delta) \ll \delta^{-1} \log M_t(R),
\ee
where the implied constant depends only on $C$, $\s_0$, and $A$.
\end{lemma}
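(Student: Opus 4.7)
The plan is to apply Jensen's formula on the disk $D_R(s_0)$. Assumption (ii) gives $|F(s_0)| \geq C > 0$, so $F$ is analytic and nonvanishing at the center, and Jensen's formula yields
\[
\sum_{\varrho \in D_R(s_0)} \log \frac{R}{|\varrho - s_0|} \; = \; \frac{1}{2\pi}\int_0^{2\pi} \log |F(s_0 + Re^{i\theta})|\,d\theta \; - \; \log |F(s_0)|,
\]
where the sum is taken over zeros of $F$ counted with multiplicity.

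Every summand on the left is non-negative, so restricting to zeros in the smaller disk $D_{R-\delta}(s_0)$ preserves the inequality. For such a zero, $|\varrho - s_0| \leq R - \delta$, whence
\[
\log \frac{R}{|\varrho - s_0|} \; \geq \; -\log\!\left(1 - \tfrac{\delta}{R}\right) \; \geq \; \frac{\delta}{R} \; \geq \; \frac{\delta}{A},
\]
using $R \leq A$. On the right hand side of the Jensen identity, I will bound $\log |F(s_0 + Re^{i\theta})| \leq \log(M_t(R) - 2) \leq \log M_t(R)$ and $-\log |F(s_0)| \leq \log(1/C)$, which together give
\[
\frac{\delta}{A}\, N_t(R-\delta) \; \leq \; \log M_t(R) + \log(1/C).
\]

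Since $M_t(R) \geq 2$ by the $+2$ in its definition, $\log M_t(R) \geq \log 2 > 0$, so $\log(1/C)$ can be absorbed into the implied constant at the cost of a factor depending only on $C$. Rearranging yields $N_t(R-\delta) \ll \delta^{-1} \log M_t(R)$ with implied constant depending only on $C$ and $A$ (as claimed). The argument is essentially a one-line application of Jensen's formula, so there is no real obstacle; the only point to verify is that the hypotheses legitimately allow us to invoke Jensen's identity at $s_0$, which assumption (ii) guarantees.
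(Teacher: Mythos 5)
Your proof is correct and follows essentially the same route as the paper: Jensen's formula on $D_R(s_0)$, the lower bound $|F(s_0)|\geq C$ from condition ii), the bound $\log M_t(R)$ for the boundary integral, and a lower bound of order $\delta/A$ for each zero counted in $D_{R-\delta}(s_0)$. The only cosmetic difference is that you bound each term $\log(R/|\varrho-s_0|)$ directly via $-\log(1-\delta/R)\geq \delta/R$, while the paper rewrites the sum as $\int_0^R N_t(x)\,dx/x$ and restricts to $[R-\delta,R]$; both yield the same estimate.
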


\begin{proof}
By Jensen's formula (see Titchmarsh ~\cite{Ti})
\[
\log|F(s_0)|=-\sum_{\vr \in D_R(\s_0+it)} \log \( \frac{R}{| \vr|}\)+\frac{1}{2\pi} \int_0^{2\pi} \log |F(R e^{i\theta}+s_0)| \, d\theta.
\]
By condition ii), we have $\log |F(s_0)| \geq \log C$. Also, 
\[
\frac{1}{2\pi} \int_0^{2\pi} \log |F(Re^{i\theta}+s_0)| \, d\theta \leq \log M_t(R).
\]
Finally, note that
\[
\sum_{\vr \in D_R(\s_0+it)} \log \( \frac{R}{| \vr|}\)=\int_0^R N_t(x) \, \frac{dx}{x} \geq \int_{R-\delta}^R N_t(x) \, \frac{dx}{x} 
\geq  N_t(R-\delta)  \int_{R-\delta}^R\frac{dx}{x}. 
\]
As $R \leq A$, the right-hand side of the inequality is $\geq (\delta/A) N_t(R-\delta)$.
Using these bounds in Jensen's formula, we deduce that
\[
\delta N_t(R-\delta) \leq A \log (M_t(R)/C).
\]
\end{proof}

\begin{lemma} \label{decent bd}
Suppose that $F(s)$ satisfies conditions $i)$ and $ii)$   above
and that $R \leq A$, for some $A>1$. Let $0<\delta <\min(1,R)/2$ and
$F_1(s)=F(s) \prod_{|s_0-\vr| \leq R-\delta} (s-\vr)^{-1}$. Then
for $z \in D_{R-2\delta}(\s_0+it)$ we have
\[
|\log |F_1(z)|| \ll  \frac{\log (1/\delta)}{\delta^2}   \,  \log M_t(R),
\]
where the implied constant depends only on $C$, $\s_0$, $A$, and $r$.
\end{lemma}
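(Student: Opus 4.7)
My plan is to bound $|F_1|$ from above on a disk slightly larger than $D_{R-2\delta}(s_0)$ by removing the zeros of $F$ via a Blaschke product, and then upgrade this one-sided bound to the claimed two-sided bound by applying the Borel-Carath\'eodory theorem to $\log F_1$.

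For the upper bound, let $s_0=\s_0+it$, set $a_i=\vr_i-s_0$ for the zeros $\vr_i$ of $F$ in $D_{R-\delta}(s_0)$, and consider
\[
B(w)=\prod_i \frac{R(w-a_i)}{R^2-\bar a_i w},
\]
which satisfies $|B(w)|=1$ on $|w|=R$. Then $F(s_0+w)/B(w)$ is analytic on $|w|<R$ and bounded by $M_t(R)$ on the boundary, so $|F(s_0+w)/B(w)|\le M_t(R)$ throughout $|w|\le R$ by the maximum modulus principle. Since $F_1(s_0+w)=(F(s_0+w)/B(w))\prod_i R/(R^2-\bar a_i w)$, we obtain $|F_1(s_0+w)|\le M_t(R)\prod_i R/|R^2-\bar a_i w|$. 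For $|w|\le R-3\delta/2$, the elementary inequality $|R^2-\bar a_i w|\ge R^2-(R-\delta)(R-3\delta/2)\gg R\delta$ gives $|F_1(s_0+w)|\ll M_t(R)\delta^{-N}$ where $N=N_t(R-\delta)$. Lemma 1 bounds $N\ll\delta^{-1}\log M_t(R)$, and hence $\log|F_1(s_0+w)|\ll \delta^{-1}\log(1/\delta)\log M_t(R)$ uniformly for $|w|\le R-3\delta/2$.

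For the central value, $|F_1(s_0)|=|F(s_0)|/\prod_i|a_i|$: condition (ii) ensures $|F(s_0)|\ge C$ and forces every $|a_i|>r$ (otherwise $F$ would vanish on $D_r(s_0)$), while trivially $|a_i|\le R\le A$, so $|\log|F_1(s_0)||\ll N\ll \delta^{-1}\log M_t(R)$. Finally, choose a branch of $g(w)=\log F_1(s_0+w)$ analytic on $|w|<R-\delta$, and apply the Borel-Carath\'eodory inequality on the concentric disks of radii $R-2\delta$ and $R-3\delta/2$ (a gap of $\delta/2$):
\[
\max_{|w|\le R-2\delta}|g(w)-g(0)|\ll \delta^{-1}\Big(\max_{|w|\le R-3\delta/2}\log|F_1(s_0+w)|-\log|F_1(s_0)|\Big).
\]
Substituting the preceding two estimates yields $|g(w)-g(0)|\ll \delta^{-2}\log(1/\delta)\log M_t(R)$ for $|w|\le R-2\delta$, and since $\Re g=\log|F_1|$ and $|g(0)|$ is dominated by the same quantity, the lemma follows. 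The one technical point requiring care is the choice of nested radii $R-2\delta<R-3\delta/2<R-\delta<R$: the Blaschke step needs a positive gap between $|w|$ and $R$ to keep $R/|R^2-\bar a_i w|$ bounded, and the Borel-Carath\'eodory step needs a positive gap between its inner and outer circles; each step contributes a factor of $\delta^{-1}$, and together they combine to give the $\delta^{-2}$ in the claim.
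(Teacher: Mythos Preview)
Your proof is correct and follows essentially the same strategy as the paper: bound $|F_1|$ from above via a maximum-modulus argument (you use a Blaschke product where the paper simply estimates $\prod |s_0-\vr|/|s-\vr|$ directly on $|s-s_0|=R$), control $|\log|F_1(s_0)||$ using condition (ii) together with the zero-count bound of Lemma~1, and then apply Borel--Carath\'eodory to $\log F_1$ to convert the one-sided bound into a two-sided one. The only cosmetic difference is your choice of intermediate radius $R-3\delta/2$ versus the paper's $R-\delta$, which does not affect the final estimate.
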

\begin{proof}
Since $F(s)$ satisfies condition ii) above we know that $|F(s_0)| \geq C$. Therefore,
on the boundary of $D_R(\s_0+it)$ (and therefore inside $D_R(\s_0+it)$ as well),
we have that
\be\label{est F1 2}
\begin{split}
\bigg|\frac{F_1(s)}{F_1(s_0)}\bigg|=\bigg|\frac{F(s)}{F(s_0)}  
\prod_{|s_0- \vr| \leq R-\delta}  \frac{s_0- \vr}{s- \vr} \bigg| 
& \leq 1/C \max_{|z-s_0|=R} (|F(z)|+2)  \(\frac{R-\delta}{\delta}\)^{N_t(R-\delta)} \\
&  \leq 1/C \max_{|z-s_0|=R} (|F(z)|+2)  \(\frac{R}{2\delta}\)^{N_t(R-\delta)}:=K,
 \end{split}
\ee
as $R-\delta<R/2$.

We now define
\[
G_1(z)=F_1(z+\s_0+it) \qquad \mbox{and} \qquad H_1(z)= \log (G_1(z)/G_1(0)),
\] 
where the branch of the logarithm is chosen so that $\log 1=0$ (so in particular $H_1(0)=0$).
Since $F_1(s)$ is non-vanishing in $D_{R-\delta}(\s_0+it)$, $H_1(z)$ is analytic for $|z|\leq R-\delta$. We also see 
by \eqref{est F1 2} that for $|z| \leq R$,
\[
\Re H_1(z) \leq \log K.
\]
Thus, by the Borel-Caratheodory theorem (see Titchmarsh ~\cite{Ti}, we take our
radii to be $R-\delta$ and $R-2\delta$),
we have for $|z| \leq R - 2 \delta$ that
\[
|H_1(z)|\leq \frac{2 (R-2\delta)}{\delta} \log K.
\]
Thus, for
$|z-s_0| \leq R-2\delta$ we have that
\bes
 |\log (F_1(z)/F_1(s_0)) | \leq \frac{2 (R-2\delta)}{\delta} \log K.
\ees
The above inequality implies that
\be \label{est F1 3}
|\log |F_1(z)||\leq \frac{2 (R-2\delta)}{\delta} \log K+|\log |F_1(s_0)| |,
\ee
for $|z-s_0| \leq R-2\delta$.

Note that $F_1(s_0)=F(s_0) \prod_{|s_0-\vr| \leq R-\delta} (s_0-\vr)^{-1}$,
and that since $F(s)$ satisfies condition ii) above, we have
 \bes 
C (R-\delta)^{-N_t(R-\delta)} \leq \bigg|F (s_0) \prod_{|s_0- \vr| \leq R-\delta} (s_0- \vr)^{-1}\bigg| \leq r^{-N_t(R-\delta)} 1/C.
\ees
So by this and \eqref{zero bd} we obtain
\[
|\log |F_1(s_0)|| \ll N_t(R-\delta) +1\ll  \delta^{-1}  \log M_t(R). 
\]
Also, by \eqref{zero bd} 
 \[
 \log K = \log (M_t( R)/C)+N_t(R-\delta) \log (R/(2\delta)) \ll \frac{\log  (1/\delta)}{\delta}   \,  \log M_t(R).
 \]
Using these estimates in \eqref{est F1 3}, we obtain the result.
\end{proof}

 \subsection{Estimation of $I_{\infty}$}

 In this section we show that the possible poles
 of $\z'/\z(s)$ will not contribute much to 
 \[
 \int_0^T \log \bigg|\frac{\z'}{\z}(\s+it) \bigg| \, dt.
 \]
 Our initial goal is to obtain a crude bound on
\bes
\int_0^T \bigg(\log \bigg|\zpz(\s+it)\bigg|\bigg)^2 \, dt. 
\ees
We define $\log \zeta(s)$ and $\log \zeta'(s)$ 
in the standard way.
\begin{lemma} \label{log zeta prime lem}
Suppose $t \neq \gamma'$.
There exists a positive integer $N_0$ such that uniformly for $-1 \leq \s \leq 4$ 
and $t \geq N_0$, we have
\[
\log \z'(s)=\sum_{|\g'-t|\leq 1}\log(s-\rho')+O(\log t),
\]
where $-\pi < \Im (s-\rho') \leq \pi$.
\end{lemma}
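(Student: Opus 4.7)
The plan is to mimic the classical proof of Backlund's formula $\log \z(s)=\sum_{|\g-t|\le 1}\log(s-\rho)+O(\log t)$, with $\z(s)$ replaced throughout by $\z'(s)$. The first step is to establish a Landau-type formula
\[
\frac{\z''}{\z'}(s)=\sum_{|\g'-t|\le 1}\frac{1}{s-\rho'}+O(\log t)
\]
uniformly for $-1\le \s\le 4$ and $t$ large. Since $G(s):=(s-1)^2 \z'(s)$ is entire (the $1/(s-1)^2$ singularity of $\z'(s)$ is killed) and of order one, Hadamard's theorem gives $G(s)=e^{A+Bs}\prod_{\rho'}(1-s/\rho')e^{s/\rho'}$, and logarithmic differentiation yields
\[
\frac{\z''}{\z'}(s)=-\frac{2}{s-1}+B+\sum_{\rho'}\left(\frac{1}{s-\rho'}+\frac{1}{\rho'}\right).
\]
Subtracting this identity at $s$ from the same identity evaluated at a fixed reference point $s_0=\sigma_0+it$, where $\sigma_0$ is large enough that $\z''/\z'(s_0)=O(1)$ (using absolute convergence of the relevant Dirichlet series), cancels $B$ and the $1/\rho'$ terms. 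The contribution from the remaining sum for zeros with $|\g'-t|>1$ is controlled by the standard counting bound $N'(t+1)-N'(t)\ll \log t$, which follows from Jensen's formula applied to $\z'$ on a disc of bounded radius centered at $s_0$, combined with the convexity bound $|\z'(s)|\ll t^C$.

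The second step is to integrate this identity along the horizontal segment from $s_0$ to $s=\s+it$; the hypothesis $t\ne \g'$, together with a small semicircular indentation around any $\rho'$ with real part in $(\s,\sigma_0)$, keeps the path disjoint from zeros of $\z'$. Each indentation contributes at most $O(1)$, and there are $O(\log t)$ of them, so the indentations only enlarge the error by $O(\log t)$. Termwise integration of the Landau-type formula produces
\[
\log \z'(s)-\log \z'(s_0)=\sum_{|\g'-t|\le 1}\bigl[\log(s-\rho')-\log(s_0-\rho')\bigr]+O(\log t).
\]
For each $\rho'$ in this local sum we have $|s_0-\rho'|\asymp 1$, so every subtracted $\log(s_0-\rho')$ is $O(1)$; multiplied by the $O(\log t)$ terms, these contribute only $O(\log t)$. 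Finally, $\log \z'(s_0)=O(1)$ by the chosen reference point, and the claimed formula follows.

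The branch condition $-\pi<\Im(s-\rho')\le\pi$ is automatic for the displayed terms, since $\Im(s-\rho')=t-\g'\in[-1,1]\subset(-\pi,\pi]$ whenever $|\g'-t|\le 1$; one need only check that the antiderivative of $1/(w-\rho')$ obtained along the horizontal integration path agrees with the principal branch of $\log(s-\rho')$, which is immediate because the path is horizontal and each $\rho'$ in the local sum lies within a unit of $t$. I expect the main technical obstacle to be accounting for the path indentations in Step 2: one must verify that the cumulative $2\pi i$ contributions arising from any $\rho'$ encircled during the integration are correctly absorbed into the $O(\log t)$ remainder, and this again reduces to the counting bound $N'(t+1)-N'(t)\ll\log t$. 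Everything else is a mechanical adaptation of the classical Backlund derivation for $\log\z(s)$.
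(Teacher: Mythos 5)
Your proposal is correct and matches the paper's intended argument: the paper's entire proof is the remark that the lemma ``follows from a similar argument'' to the classical formula for $\log\zeta(s)$ in Titchmarsh, and your route --- Hadamard factorization of the order-one entire function $(s-1)^2\zeta'(s)$, the counting bound $N'(t+1)-N'(t)\ll\log t$, and horizontal integration from a reference point $\sigma_0+it$ --- is precisely that adaptation. Two minor remarks: since $t\neq\gamma'$ the horizontal path already avoids every $\rho'$, so the indentations and the $2\pi i$ bookkeeping you worry about are unnecessary; and one must take the branch of $\log\zeta'(\sigma_0+it)$ with imaginary part in $(-\pi,\pi]$ at the reference point, since $\zeta'(\sigma_0+it)\sim -2^{-\sigma_0-it}\log 2$ rotates with $t$ and a globally continued branch would make $\Im\log\zeta'(s_0)$ of size $t$ rather than $O(1)$.
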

\begin{proof}
The analogue of this formula for $\zeta(s)$
is well-known and is proved in Chapter IX of ~\cite{Ti}.
The lemma follows from a similar argument.
\end{proof}
\begin{lemma} \label{simple bd}
Uniformly for $ -1 \leq \s \leq 2$ we have
\[
\int_0^T \bigg(\log \AB \zpz(\s+it) \AB\bigg)^2 \, dt \ll T \log^2 T.
\]
\end{lemma}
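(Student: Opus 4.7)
My plan is to reduce the problem to controlling $\log|\zeta|$ and $\log|\zeta'|$ separately and then exploit the standard representation of each in terms of a short sum over nearby zeros.

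First I would write $\log|\zeta'/\zeta(\sigma+it)| = \log|\zeta'(\sigma+it)| - \log|\zeta(\sigma+it)|$, so that
\[
\left(\log\left|\frac{\zeta'}{\zeta}(\sigma+it)\right|\right)^2 \leq 2\big(\log|\zeta(\sigma+it)|\big)^2 + 2\big(\log|\zeta'(\sigma+it)|\big)^2,
\]
and then estimate each of the two integrals separately. (The trivial contribution from $t\in[0,N_0]$ is absorbed into the error.) The key inputs are Lemma \ref{log zeta prime lem} for $\log\zeta'$ and its classical analogue
\[
\log \zeta(s) = \sum_{|\gamma-t|\leq 1} \log(s-\rho) + O(\log t)
\]
(proved in Titchmarsh \cite{Ti}, Ch. IX), both of which give, in the strip $-1\leq\sigma\leq 2$,
\[
\log|\zeta(\sigma+it)| = \sum_{|\gamma-t|\leq 1}\log|s-\rho| + O(\log t), \qquad
\log|\zeta'(\sigma+it)| = \sum_{|\gamma'-t|\leq 1}\log|s-\rho'| + O(\log t).
\]

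Next I would square each identity. Since the number of zeros of $\zeta$ (resp.\ $\zeta'$) in any unit interval of heights is $O(\log t)$, Cauchy--Schwarz gives
\[
\left(\sum_{|\gamma-t|\leq 1}\log|s-\rho|\right)^2 \ll \log T \cdot \sum_{|\gamma-t|\leq 1}\big(\log|s-\rho|\big)^2,
\]
with an analogous inequality for the $\zeta'$ sum. Integrating over $t\in[0,T]$ and swapping sum and integral yields
\[
\int_0^T \left(\sum_{|\gamma-t|\leq 1}\log|s-\rho|\right)^2 dt \ll \log T \sum_{0<\gamma\leq T+1}\int_{\gamma-1}^{\gamma+1}\big(\log|\sigma+it-\rho|\big)^2 dt.
\]

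For the inner integral, for each fixed zero $\rho=\beta+i\gamma$ with $\beta\in[0,1]$ and $-1\leq\sigma\leq 2$, the integrand is $\log^2\sqrt{(\sigma-\beta)^2+(t-\gamma)^2}$; since $(\log r)^2$ is locally integrable near $0$ and bounded on bounded intervals, the integral is $O(1)$ uniformly in $\rho$ and $\sigma$. Using the Riemann--von Mangoldt estimate $N(T+1)\ll T\log T$ (and similarly \eqref{sum zetap zeros} for the zeros of $\zeta'$), the double sum contributes $O(\log T\cdot T\log T) = O(T\log^2 T)$. Adding the $O(T\log^2 T)$ from the $(\log t)^2$ error term completes the bound.

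The only delicate point is checking that the integral $\int_{\gamma-1}^{\gamma+1}(\log|\sigma+it-\rho|)^2 dt$ is bounded by an absolute constant uniformly in $\sigma\in[-1,2]$ and in $\beta\in[0,1]$, which is routine since the worst case occurs when $\sigma=\beta$, giving $\int_{-1}^{1}(\log|u|)^2 du<\infty$. Everything else is just bookkeeping, so I do not anticipate any substantial obstacle.
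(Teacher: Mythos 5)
Your proposal is correct and follows essentially the same route as the paper: split $\log|\zeta'/\zeta|$ into $\log|\zeta'|-\log|\zeta|$, expand each via the short sum over nearby zeros plus $O(\log t)$, apply Cauchy--Schwarz using the $O(\log t)$ zero count per unit interval, and bound the resulting inner integrals $\int_{\gamma-1}^{\gamma+1}(\log|\sigma+it-\rho|)^2\,dt$ by an absolute constant. The paper's proof differs only in trivial bookkeeping (it partitions $[N_0,T]$ into unit intervals before swapping sum and integral), so no further comment is needed.
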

\begin{proof}
By Lemma \ref{log zeta prime lem} 
we have uniformly  for $-1 \leq \s \leq 4$ that
\be \label{crude bd 1}
\begin{split}
\int_0^T \log ^2 | \z'(\s+it)| \, dt \ll &
\int_0^{N_0} \log ^2 | \z'(\s+it)| \, dt\\
&+\int_{N_0}^T \bigg(\sum_{|t-\g'| \leq 1} \log | \s+it-\rho' | \bigg)^2 \, dt 
+O(T \log ^2 T).
\end{split}
\ee
It follows from \eqref{sum zetap zeros} that $N'(t+1)-N'(t) \ll \log t$, for $t$ sufficiently large.
By this and Cauchy's inequality
\be \label{crude bd 2}
\int_{N_0}^T \bigg(\sum_{|t-\g '| \leq 1} \log | \s+it-\rho' |\bigg)^2 \, dt \ll \log T \int_{N_0}^T \sum_{|t-\g '| \leq 1} \log^2 | \s+it-\rho' | \, dt.
\ee

We now partition the interval $(N_0, \lceil T \rceil)$ 
into subintervals, each of length one, and interchange the sum 
and integral to see that
\bes
\begin{split}
\int_{N_0}^T\; \sum_{|t-\g'| \leq 1} \log^2 | \s+it-\rho' | \, dt 
\ll & \sum_{n=N_0}^{\lceil T \rceil} \;\int_n^{n+1}\; \sum_{|t-\g'| \leq 1} \log ^2 | \s+it-\rho' | \,dt \\
\ll & \sum_{n=N_0}^{\lceil T \rceil}\; \sum_{n-1 \leq \g' \leq n+2} \; \int_{\g'-1}^{\g'+1} \log^2 | \s+it-\rho' | \, dt.
\end{split}
\ees
Note that
\[
\int_{\g'-1}^{\g'+1} \log^2 | \s+it-\rho' | \,dt \ll 1.
\]
Hence, the above is 
\[
\ll \sum_{n=N_0}^{\lceil T \rceil} \sum_{n-1 \leq \g' \leq n+2}  1 \ll \sum_{n=N_0}^{\lceil T \rceil} \log (n+2) \ll T \log T.
\]
From this, \eqref{crude bd 1}, and \eqref{crude bd 2}, along with the simple observation that
$\int_0^{N_0} \log ^2 | \z'(\s+it)| dt \ll 1$, we have that
\[
\int_0^T \log^2 | \z'(\s+it) | \, dt \ll T \log ^2 T.
\]

A  similar argument
shows that, uniformly for $-1 \leq \s \leq 2$,
\[
\int_0^T \log^2 |\z(\s+it)| \, dt \ll T \log^2 T.
\]
The lemma now follows by noting that
\[
\int_0^T \bigg(\log \bigg| \zpz(\s+it) \bigg|\bigg)^2 \, dt \ll \int_0^T \log^2 |\z'(\s+it)| \, dt+\int_0^T \log^2 |\z(\s+it)| \, dt.
\]
\end{proof}

We can now prove 
 \begin{lemma} \label{I infty bd}
 Let $1/2+50 \log \log T/ \log T \leq \s \leq 2$. Then we have
 \bes
 I_{\infty} \ll \frac{T}{\log^2 T}.
 \ees
 \end{lemma}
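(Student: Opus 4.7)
\medskip

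\noindent\textbf{Proof proposal.}
The plan is to bound $I_\infty$ by first showing that the set on which $|\zeta'/\zeta(\sigma+it)V^{-1/2}|$ exceeds $\mathcal N$ has small measure, and then combining this with the crude $L^2$-bound of Lemma~\ref{simple bd} via Cauchy--Schwarz. Write
\[
B=\Bigl\{t\in(0,T):\bigl|\tfrac{\zeta'}{\zeta}(\sigma+it)V^{-1/2}\bigr|>\mathcal N\Bigr\}
=\Bigl\{t\in(0,T):\bigl|\tfrac{\zeta'}{\zeta}(\sigma+it)\bigr|>c_1\log^2 T\Bigr\},
\]
since $\mathcal N=c_1V^{-1/2}\log^2 T$. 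By the remarks following \eqref{landau's formula}, if $t>10$ and no non-trivial zero $\rho=\beta+i\gamma$ of $\zeta$ satisfies $|\sigma+it-\rho|\le 1/\log T$, then $|\zeta'/\zeta(\sigma+it)|\le c_1\log^2 T$. Hence $B$ (apart from a bounded initial segment) is contained in the union, over zeros $\rho$ with $\beta>\sigma-1/\log T$ and $0<\gamma\le T+1$, of the intervals $|t-\gamma|\le 1/\log T$.

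The next step is a counting estimate. With $\sigma\ge \tfrac12+50\log\log T/\log T$ we have $\sigma-1/\log T-\tfrac12\ge (50\log\log T-1)/\log T$, so Selberg's zero density estimate \eqref{zero density} yields
\[
N\bigl(\sigma-1/\log T,\,T+1\bigr)\ll T^{1-(\sigma-1/\log T-1/2)/4}\log T\ll T(\log T)^{-11}.
\]
Each such zero contributes an interval of length at most $2/\log T$ to $B$, giving $\meas(B)\ll T(\log T)^{-12}$.

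Finally, by Cauchy--Schwarz,
\[
|I_\infty|^2=\Bigl(\int_B\log\bigl|\tfrac{\zeta'}{\zeta}(\sigma+it)V^{-1/2}\bigr|\,dt\Bigr)^2
\le\meas(B)\cdot\int_0^T\Bigl(\log\bigl|\tfrac{\zeta'}{\zeta}(\sigma+it)V^{-1/2}\bigr|\Bigr)^2\,dt.
\]
Since $|\tfrac12\log V|\ll\log\log T$ by \eqref{V bd}, Lemma~\ref{simple bd} gives that the second integral is $\ll T\log^2 T$, whence $|I_\infty|^2\ll T^2(\log T)^{-10}$. This bound is much stronger than $T/\log^2 T$, so the lemma follows.

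The only place where the hypothesis $\sigma\ge\tfrac12+50\log\log T/\log T$ is used in an essential way is to make Selberg's density estimate save a large power of $\log T$; any constant comfortably larger than $8$ in place of $50$ would suffice, since we only need $\meas(B)\ll T(\log T)^{-6}$ to close the Cauchy--Schwarz argument. The expected obstacle is not the execution but keeping track of the exact powers of $\log T$ lost in the $L^2$-bound of Lemma~\ref{simple bd} (which reflects the logarithmic singularities of $\log|\zeta'/\zeta(\sigma+it)|$ near zeros of $\zeta$ and $\zeta'$); the slack in Selberg's estimate absorbs these losses automatically.
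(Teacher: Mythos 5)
Your proposal is correct and follows essentially the same route as the paper: both arguments show that the exceptional set has measure $\ll T(\log T)^{-12}$ by combining the local bound $|\zeta'/\zeta(s)|\ll\log^2 T$ away from zeros with Selberg's density estimate \eqref{zero density}, and then close via Cauchy--Schwarz against the $L^2$-bound of Lemma~\ref{simple bd}. The only cosmetic difference is that the paper first peels off the $O(\meas(S_\infty)\log\log T)$ contribution of $\log V^{-1/2}$ before applying Cauchy--Schwarz, whereas you absorb it into the $L^2$ estimate directly; both are fine.
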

 \begin{proof}
 Let $S_{\infty}$ be the subset of $t$ in
 $(0, T)$ for which $|\z'/\z(\s+it)| V^{-1/2} > \mathcal N$ (where $\mathcal N$
 is defined in \eqref{def N}). By \eqref{V bd} we have
 \[
 I_{\infty}=\intl_{S_{\infty}} \log \AB \zpz(\s+it) V^{-1/2} \AB \, dt
 =\intl_{S_{\infty}} \log \AB \zpz(\s+it) \AB \, dt+O\Big(\meas(S_{\infty})\log \log T \Big).
 \]
By Lemma \ref{simple bd} and Cauchy's inequality
\be \label{bo bound}
\intl_{S_{\infty}} \log  \bigg|\zpz(\s+it)\bigg| \, dt \ll \(\meas \(S_{\infty}\)\)^{1/2}T^{1/2} \log T.
\ee

We now let $\s'=1/2+49\log \log T/ \log T$, so that $\s -1/\log T> \s'$.
To estimate the size of $S_{\infty}$ consider
\be \label{A inf def}
A_{\infty}=\bigcup\limits_{\substack{ 0< \g \leq T+1 \\ \beta > \s' }} \Big\{ t \in (0, T) : |t-\g| \leq 1/\log T  \Big\}.
\ee
By definition, if $t \in S_{\infty}$ there is a zero 
of $\z(s)$ satisfying $|\rho-(\s+it)| \leq 1/\log T$. Thus,
$|\beta-\s| \leq 1/ \log T$  and $|t-\g| \leq 1/\log T$.  In particular, $\beta\geq\s-1/\log T>\s'$; 
it follows that 
$t \in A_{\infty}$. Thus,
\be \label{A inf bd}
\meas \(S_{\infty}\) \leq \meas \( A_{\infty}\) 
\ll  \sum_{\substack{ 0<\g \leq T+1 \\ \beta> \s'}} (1/\log T) \ll T^{1-(\s'-1/2)/4} \ll \frac{T}{\log^{12} T}
\ee
by \eqref{zero density}. 
Using this in \eqref{bo bound}
we complete the proof.
\end{proof}

 \subsection{Estimation of $I_0$}
Let $\mathbf R=\s_0-(\s+1/2)/2$ and let $\s_0$ be a fixed, suitably large 
constant ($\s_0=5$  suffices). 
 We now define $S_0=\{ t \in (0, T) : \ab \z'/\z(\s+it) V^{-1/2} \ab \leq \varepsilon \}$, 
where $\varepsilon$ is defined in \eqref{e def}. We have
\[
I_0=\intl_{S_0} \log  \bigg|\zpz(\s+it) V^{-1/2} \bigg| \, dt.
\]

To estimate $I_0$ we partition the set $S_0$ into $\mathcal M_0$ and $\mathcal E_0$, 
where $\mathcal M_0$ is the subset
of $S_0$ consisting of those $t$ such that $D_{\mathbf R}(\s_0+it)$ does not 
contain a zero or pole of the Riemann zeta-function, and  $\mathcal E_0$ contains the remaining $t$.  
We first estimate the integral over $\mathcal E_0$.
\begin{lemma} \label{b0 bd}
Let $1/2+50  \log \log T/ \log T \leq\sigma \leq 2$ and $\mathbf R=\s_0-(\s+1/2)/2$, where $\s_0  \geq 5$
is fixed. Also, let $\mathcal E_0$ denote the set of $0 < t < T$ such that  $D_{\mathbf R}(\s_0+it)$ contains 
a zero or pole of the Riemann zeta-function and such that
$|\z'/\z(\s+it)V^{-1/2}|<\varepsilon$,
where $\varepsilon$ is as defined in \eqref{e def}. Then we have
\[
\intl_{\mathcal E_0} \log  \bigg|\zpz(\s+it) V^{-1/2} \bigg| \, dt \ll \frac{T}{\log^2 T}.
\]
\end{lemma}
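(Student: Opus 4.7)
\textbf{Proof plan for Lemma \ref{b0 bd}.}  The natural approach is to separately control (i) the Lebesgue measure of the ``bad'' set $\mathcal{E}_0$ and (ii) the $L^2$ mass of $\log|\zeta'/\zeta(\sigma+it)V^{-1/2}|$, and then combine these by Cauchy--Schwarz.  For (i) one uses Selberg's zero-density estimate \eqref{zero density} exactly as in the proof of Lemma \ref{I infty bd}, except that now the relevant disk has radius $\mathbf{R}=\sigma_0-(\sigma+\tfrac12)/2=O(1)$ rather than $1/\log T$.  For (ii) one uses the $L^2$ bound already established in Lemma \ref{simple bd}.

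For the measure: if $t>N_0$ is large enough, the pole of $\zeta$ at $s=1$ is far from $D_{\mathbf{R}}(\sigma_0+it)$, so we may restrict to the case where the disk contains a non-trivial zero $\rho=\beta+i\gamma$.  The condition $|\rho-(\sigma_0+it)|\le \mathbf{R}$ forces
\[
\beta\ge\sigma_0-\mathbf{R}=\frac{\sigma+\tfrac12}{2}=:\sigma_1
\qquad\text{and}\qquad |t-\gamma|\le\mathbf{R}=O(1).
\]
Each such zero therefore contributes a $t$-interval of length $O(1)$, giving
\[
\meas(\mathcal E_0)\ll N(\sigma_1,T+O(1))\ll T(\log T)^{1-(\sigma_1-1/2)/4}
\]
by \eqref{zero density}.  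The hypothesis $\sigma-\tfrac12\ge 50\,\mathcal L_2/\log T$ yields $\sigma_1-\tfrac12\ge 25\,\mathcal L_2/\log T$, so $\meas(\mathcal E_0)\ll T/(\log T)^{A}$ for a suitable constant $A>0$ determined by this power of Selberg.

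For the integral, Cauchy--Schwarz gives
\[
\Bigl|\int_{\mathcal E_0}\log\Bigl|\zpz(\sigma+it)V^{-1/2}\Bigr|\,dt\Bigr|
\le (\meas \mathcal E_0)^{1/2}
\Bigl(\int_0^T\log^2\Bigl|\zpz(\sigma+it)V^{-1/2}\Bigr|\,dt\Bigr)^{1/2}.
\]
Using $(a+b)^2\ll a^2+b^2$, Lemma \ref{simple bd}, and the trivial estimate $(\log V)^2\ll\mathcal L_2^2$ (cf.\ \eqref{V bd}), the right-hand $L^2$ integral is $\ll T\log^2 T$.  Combining the two bounds gives $\ll T(\log T)^{1-A/2}$, which is $\ll T/\log^2 T$ once $A$ is large enough (the constant $\sigma_0\ge5$ and the constant in the hypothesis $1/2+50\mathcal L_2/\log T$ are chosen so that this holds, which can be verified directly from the quantitative form of \eqref{zero density}).

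The main technical obstacle is balancing the radius of the disk against the zero-density saving.  Because the disk $D_{\mathbf{R}}(\sigma_0+it)$ is of \emph{fixed} radius—unlike the disk of radius $1/\log T$ arising in the estimation of $I_{\infty}$—each zero with $\beta\ge\sigma_1$ contributes $O(1)$ to $\meas(\mathcal E_0)$ instead of $O(1/\log T)$.  Consequently the saving comes entirely from pushing $\sigma_1$ far enough to the right of $1/2$ (via the hypothesis on $\sigma$) for Selberg's bound to compensate for the $L^2$ loss $T\log^2T$ absorbed through Cauchy--Schwarz.  No cancellation between the two conditions defining $\mathcal E_0$ is needed; it suffices to use them one at a time.
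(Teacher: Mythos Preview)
Your proposal is correct and follows essentially the same route as the paper: bound $\meas(\mathcal E_0)$ by observing that a zero in $D_{\mathbf R}(\sigma_0+it)$ forces $\beta\ge(\sigma+1/2)/2$ and applying Selberg's zero-density estimate \eqref{zero density}, then combine this with the $L^2$ bound of Lemma \ref{simple bd} via Cauchy--Schwarz. One small slip in your display: the exponent $1-(\sigma_1-\tfrac12)/4$ from Selberg belongs on $T$, not on $\log T$ (as written your bound is only $\ll T\log T$); with it placed correctly one gets $\meas(\mathcal E_0)\ll T/(\log T)^A$ as you then state, and the paper records this as $\meas(\mathcal E_0)\ll T/\log^6 T$.
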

\begin{proof}
First, note that
\[
\intl_{\mathcal E_0} \log  \bigg|\zpz(\s+it)V^{-1/2}\bigg| \, dt \ll
\intl_{\mathcal E_0} \log  \bigg|\zpz(\s+it)\bigg| \,dt +O\Big(\meas(\mathcal E_0) \log \log T\Big)
\]
by \eqref{V bd}. Next, by Lemma \ref{simple bd}
\bes
\begin{split}
\intl_{\mathcal E_0} \log   \bigg|\zpz(\s+it) \bigg| \, dt \ll& (\meas(\mathcal E_0))^{1/2}
\bigg(\int_0^T  \bigg(\log   \bigg|\zpz(\s+it) \bigg| \bigg)^2 \, dt \bigg)^{1/2} \\
\ll& (\meas(\mathcal E_0))^{1/2} T^{1/2} \log T.
\end{split}
\ees
Finally, to estimate $\meas(\mathcal E_0)$, observe that if $t \in \mathcal E_0$ is sufficiently large,
then
there is a zero of $\z(s)$, $\rho$, such that $|\rho-\s_0+it| \leq \mathbf R=\s_0-(\s+1/2)/2$.
This implies that $|\beta-\s_0| \leq \s_0 -(\s+1/2)/2$, so $\beta \geq (\s+1/2)/2$. Hence,
for sufficiently large $t \in \mathcal E_0$ we have
\[
t \in \bigcup_{\substack{0< \g \leq T+1 \\ \beta \geq (\s+1/2)/2}} \Big\{t \in (0, T) : 
|t-\g| \leq \mathbf R \Big\}.
\]
As in \eqref{A inf bd} we use
\eqref{zero density} to bound the Lebesgue measure of
the above set. This yields the estimate $\meas(\mathcal E_0) \ll T/\log^6 T$.
Therefore,
\bes
\intl_{\mathcal E_0} \log   \bigg|\zpz(\s+it)V^{-1/2} \bigg| \, dt \ll \frac{T}{\log^2 T}.
\ees
\end{proof}

\begin{lemma} \label{m0 first bd}
Let $1/2+50 \log \log T/ \log T \leq \sigma \leq 2$ 
and $\delta=(\s-1/2)/8$. Also, let $\mathbf R=\s_0-(\s+1/2)/2$, where $\s_0\geq5$ is fixed,
and $R=\mathbf R -\delta$. In addition, define
$\mathcal M_0$ to be the subset of $0<t<T$
such that $D_{\mathbf R}(\s_0+it)$ does not contain a zero or pole of $\z(s)$ and such that
$|\z'/\z(\s+it)V^{-1/2}|<\varepsilon$,
where $\varepsilon$ is defined in \eqref{e def}. Then we have that
\bes 
\begin{split}
\intl_{\mathcal M_0} \log \AB \zpz(\s+it)V^{-1/2}\AB \, dt \ll &
\frac{\log(1/\delta)}{\delta^2} \meas\big(\mathcal M_0\big) \log \log T \\
 &+ \intl_{\mathcal M_0} \sum_{|\vr'-s_0| \leq R-\delta} |\log |\s+it-\vr'|| \, dt,
\end{split}
\ees
where the sum runs over the zeros, $\vr'$, of $\zeta'/\zeta(s)$.
\end{lemma}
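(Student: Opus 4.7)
The plan is to apply Lemma~\ref{decent bd} to $F(s)=\zpz(s)$ centered at $s_0=\s_0+it$ for each $t\in\mathcal M_0$, and then convert the resulting bound for the accessory function $F_1$ into a pointwise bound for $\log|\zpz(\s+it)V^{-1/2}|$.

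I would first verify the hypotheses of Lemma~\ref{decent bd}. By the very definition of $\mathcal M_0$, the disk $D_{\mathbf R}(\s_0+it)$ contains no zeros or poles of $\z$, so $\zpz$ is analytic in $D_R(\s_0+it)\subset D_{\mathbf R}(\s_0+it)$ (condition i)). For condition ii), a fixed small disk around $s_0$ lies in the region of absolute convergence of $-\zpz(s)=\sum_n\Lambda(n)/n^s$ since $\s_0\geq 5$; this gives a uniform upper bound on $|F|$, and a uniform lower bound follows because the dominant term $(\log 2)/2^{\s_0}$ outweighs the tail, together with continuity.

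The main technical step is to control $M_t(R)=\max_{|z-s_0|=R}|\zpz(z)|+2$ uniformly on $\mathcal M_0$. Using Landau's formula \eqref{landau's formula} together with $N(t+1)-N(t)\ll\log t$, it suffices to estimate $1/|z-\rho|$ for the $O(\log t)$ zeros $\rho$ with $|\g-\Im z|\leq 1$. Since every such zero lies outside $D_{\mathbf R}(s_0)$ by hypothesis, the triangle inequality gives
\[
|z-\rho|\;\geq\;|\rho-s_0|-|z-s_0|\;>\;\mathbf R-R\;=\;\delta
\]
for $z$ on the circle $|z-s_0|=R$, so $M_t(R)\ll(\log T)/\delta$ and hence $\log M_t(R)\ll\log\log T$ in the relevant range $\delta\gg 1/\log T$. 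With this input, Lemma~\ref{decent bd} applied at the point $z=\s+it$ (which lies in $D_{R-2\delta}(s_0)$, since the inequality $\s_0-\s\leq R-2\delta$ unwinds to $\s\geq 1/2$) delivers
\[
\big|\log|F_1(\s+it)|\big|\;\ll\;\frac{\log(1/\delta)}{\delta^2}\,\log\log T.
\]

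To conclude, unwinding the definition of $F_1$ gives the pointwise identity
\[
\log\bigg|\zpz(\s+it)V^{-1/2}\bigg|\;=\;\log|F_1(\s+it)|+\sum_{|s_0-\vr'|\leq R-\delta}\log|\s+it-\vr'|-\tfrac12\log V.
\]
Since $|\log V|\ll\log\log T$ throughout the range considered, applying the triangle inequality and integrating over $\mathcal M_0$ produces the claimed bound, the subdominant $(\log\log T)\meas(\mathcal M_0)$ term being absorbed into the first term on the right. The main obstacle is really the uniform control of $M_t(R)$: one must leverage the definition of $\mathcal M_0$ precisely enough that the zeros of $\z$ contributing to Landau's formula are all pushed at distance at least $\delta$ from the circle $|z-s_0|=R$, which is what forces the staggered radii $\mathbf R,\ R,\ R-\delta$ to be chosen in exactly the way indicated.
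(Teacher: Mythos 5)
Your proof is correct and follows essentially the same route as the paper: both arguments rest on Lemma~\ref{decent bd}, on controlling $\log M_t(R)\ll\log\log T$ via the zero-free disk $D_{\mathbf R}(\s_0+it)$ together with \eqref{landau's formula}, and on the decomposition $\log|F|=\log|F_1|+\sum\log|\s+it-\vr'|$. The only cosmetic differences are that the paper works with $(-2^s/\log 2)\,\z'/\z(s)$ (harmless, since the extra factor is bounded above and below) and obtains the upper bound on the integral separately from the bound $|\z'/\z(\s+it)|\ll V^{1/2}\varepsilon$ on $\mathcal M_0$, whereas you get a two-sided pointwise bound directly and integrate.
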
 
\begin{proof}
Observe that
\be \label{first step 1}
\intl_{\mathcal M_0} \log   \bigg|\zpz(\s+it) V^{-1/2}  \bigg| \, dt=\intl_{\mathcal M_0} \log   \bigg|\zpz(\s+it)  \bigg| \, dt 
+O\Big( \meas(\mathcal M_0) \log \log T \Big).
\ee
To bound the integral on the right  
we use Lemma \ref{decent bd}.  First note that for 
$F(z)=(-2^z/\log 2) \z'/\z(z)$,
$\delta=(\s-1/2)/8$, fixed
$\s_0 \geq 5$, $R=\mathbf{R}-\delta$, there exists
fixed real numbers $r, C >0$ such that
conditions i) and ii) from Section~\ref{complex anal} are satisfied whenever $t \in \mathcal M_0$. 

Let $\vr'$ denote a zero of $F(s)=(-2^s/\log 2) \z'/\z(s)$. (If all the zeros of $\z(s)$ are simple then the
notation $\vr'$ is equivalent to $\rho'$.)  

By Lemma \ref{decent bd} there is an absolute constant $c_2>0$ such that
\bes
\intl_{\mathcal M_0} \log \AB \frac{2^{\s+it}}{\log 2} \zpz(\s+it)\AB \, dt  \geq  -\frac{c_2\log(1/\delta)}{\delta^2} \,\intl_{\mathcal M_0} \log M_t(R) \, dt \\
+\intl_{\mathcal M_0} \sum_{|\vr'-s_0| \leq R-\delta} \log |\s+it-\vr'| \, dt.
\ees
Also, since for $t \in \mathcal M_0$ we have $\z'/\z(\s+it) \ll V^{1/2} \varepsilon \ll \log T$, it follows that
\bes
\intl_{\mathcal M_0} \log \AB \frac{2^{\s+it}}{\log 2} \zpz(\s+it) \AB \, dt \leq  c_3 \meas(\mathcal M_0) \log \log T,
\ees
for some absolute constant $c_3>0$.
 Hence,
\be \label{a0 bd 1}
\begin{split}
\intl_{\mathcal M_0} \log \AB \zpz(\s+it)\AB \, dt \ll &
\frac{\log (1/\delta)}{\delta^2} \,\intl_{\mathcal M_0} \log M_t(R) \, dt 
+  \intl_{\mathcal M_0} \sum_{|\vr'-s_0| \leq R-\delta} |\log |\s+it-\vr'|| \, dt\\
&+  \meas(\mathcal M_0) \log \log T.
\end{split}
\ee

Recall that for $t \in \mathcal M_0$ the disk $D_{\mathbf R}(\s_0+it)$ does
not contain a zero or pole of $\z(s)$. 
Thus, for $t \in \mathcal M_0$ if $z \in D_R(\s_0+it)$ then $z$ is at least
$\delta>1/\log T$ away from a zero or pole of $\z(s)$. By this, \eqref{landau's formula},
and the estimate $N(t+1)-N(t) \ll \log t$ we get that 
for $z \in D_R(\s_0+it)$ where $t \in \mathcal M_0$ 
that $\zeta'/\zeta(z) \ll \log^2 T$. So that whenever $t \in \mathcal M_0$
we have
\be \label{bd for m}
\log M_t(R) \ll \log \log T.
\ee
Thus,
\be \label{a0 bd 2}
\intl_{\mathcal M_0} \log M_t(R) \, dt \ll \meas\(\mathcal M_0\) \log \log T.
\ee
By \eqref{first step 1}, \eqref{a0 bd 1}, and \eqref{a0 bd 2} the lemma follows.
\end{proof}

We now prove
\begin{lemma} \label{m0 second bd}
Let $\mathcal M_0$, $R$, $s_0$, $\s$, and $\delta$ be as in the previous lemma.
We have that
\bes \label{end m0}
\intl_{\mathcal M_0} \sum_{|\vr'-s_0| \leq R-\delta} |\log |\s+it-\vr'|| \, dt \ll 
\Big( \meas\big(\mathcal M_0\big) \Big)^{3/4} T^{1/4} \delta^{-3/2}  \log \log T,
\ees
where $\vr'$ denotes a zero of $\z'/\z(s)$.
\end{lemma}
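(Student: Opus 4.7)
The plan is to apply H\"older's inequality with exponents $(4/3, 4)$ to factor out $\meas(\mathcal M_0)^{3/4}$, and then to control the $L^4$ norm of the inner sum. Writing
\[
\Phi(t) = \sum_{|\vr' - s_0| \leq R - \delta} \bigl|\log|\s + it - \vr'|\bigr|,
\]
H\"older's inequality gives
\[
\int_{\mathcal M_0} \Phi(t)\,dt \leq \meas(\mathcal M_0)^{3/4} \Bigl(\int_{\mathcal M_0} \Phi(t)^4 \, dt\Bigr)^{1/4},
\]
so the task reduces to bounding the fourth moment of $\Phi$.

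For the $L^4$ estimate, two applications of the Cauchy--Schwarz inequality yield
\[
\Phi(t)^4 \leq N_t(R-\delta)^3 \sum_{|\vr' - s_0| \leq R - \delta} \bigl|\log|\s+it-\vr'|\bigr|^4.
\]
On $\mathcal M_0$, I would apply \eqref{zero bd} to the function $(-2^z/\log 2)\zpz(z)$ (the same function appearing in the proof of Lemma \ref{m0 first bd}); combined with the estimate $\log M_t(R) \ll \log\log T$ from the derivation of \eqref{bd for m}, this yields the zero-count bound $N_t(R-\delta) \ll \delta^{-1}\log\log T$.

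To finish, swap the sum and integral in $\int \sum |\log|\cdot||^4\,dt$. For each zero $\vr'$, writing $u = t - \g'$ and $a = |\s - \beta'|$, the local integral $\int_{|u|\leq R-\delta} |\log\sqrt{a^2+u^2}|^4\,du$ is $O(1)$ uniformly in $a$, because $\int_0^{R-\delta} |\log u|^4\,du < \infty$. The number of zeros $\vr'$ of $\zpz(s)$ with $\g' \in [-R, T + R]$ is $\ll T\log T$ by \eqref{sum zetap zeros}. Combining,
\[
\int_{\mathcal M_0} \Phi(t)^4\,dt \ll \delta^{-3}(\log\log T)^3 \cdot T\log T,
\]
and inserting this into the H\"older bound, then using the lower bound $\delta \gg \mathcal L_2/(\log T)^{1/3}$ to absorb the residual power of $\log T$ into a power of $1/\delta$, produces the claimed estimate.

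The main obstacle is the zero-counting step: securing $N_t(R-\delta) \ll \delta^{-1}\log\log T$ on $\mathcal M_0$. This depends crucially on the defining property that for $t \in \mathcal M_0$ the disk $D_{\mathbf R}(\s_0 + it)$ contains no zero of $\z(s)$, so that $|\zpz(z)| \ll \log^2 T$ throughout $D_R(\s_0 + it)$, and hence the Jensen-type bound \eqref{zero bd} can be invoked with the sharp value $\log M_t(R) \ll \log\log T$ rather than the trivial $\log\log M_t(R) \ll \log\log T$.
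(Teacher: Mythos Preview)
Your argument is sound through the fourth-moment bound
\[
\int_{\mathcal M_0} \Phi(t)^4\,dt \;\ll\; \delta^{-3}(\log\log T)^3 \cdot T\log T,
\]
but the last step is wrong. Taking fourth roots yields
\[
\bigl(\meas(\mathcal M_0)\bigr)^{3/4}\, T^{1/4}\,\delta^{-3/4}\,(\log\log T)^{3/4}\,(\log T)^{1/4},
\]
and for this to be $\ll (\meas \mathcal M_0)^{3/4}\,T^{1/4}\,\delta^{-3/2}\log\log T$ one needs $\delta^{3/4}(\log T)^{1/4}\ll (\log\log T)^{1/4}$, i.e.\ an \emph{upper} bound $\delta \ll (\mathcal L_2/\log T)^{1/3}$. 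Invoking the lower bound $\delta \gg \mathcal L_2/(\log T)^{1/3}$ goes the wrong way, and the upper bound actually available from the hypotheses, $\delta \ll 1/\mathcal L_2$, is far too weak: at $\delta\asymp 1/\mathcal L_2$ your estimate exceeds the claimed one by a factor $\asymp (\log T)^{1/4}/\mathcal L_2$. So the lemma as stated is not established by your argument.

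The paper also opens with H\"older with exponents $(4/3,4)$, but treats the $L^4$ piece by localizing rather than by a global zero count. It partitions the zero-free set $\mathcal A\supset\mathcal M_0$ into $K\ll T$ subintervals $[a_n,b_n]$ of length at most $1$; on each it replaces the $t$-dependent index set $\{|\vr'-s_0|\le R-\delta\}$ by a fixed set $\mathcal D_n$ built from $O(\delta^{-1/2})$ disks of radius $R$, and shows via \eqref{zero bd} that $\#\mathcal D_n\ll \delta^{-3/2}\mathcal L_2$ uniformly in $n$. Minkowski's inequality on $[a_n,b_n]$ then gives $\bigl(\int_{a_n}^{b_n}\Phi^4\bigr)^{1/4}\ll \#\mathcal D_n$, whence $\sum_n\int_{a_n}^{b_n}\Phi^4\ll T\,(\delta^{-3/2}\mathcal L_2)^4$ and the stated bound follows. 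The key point is that the fourth power lands on the \emph{local} zero count $\delta^{-3/2}\mathcal L_2$, which is independent of $T$; your Cauchy--Schwarz route instead passes through the global count $\sum_{\vr'}1\ll T\log T$, and that stray $\log T$ is exactly what cannot be absorbed when $\delta$ is near the top of its range.
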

\begin{proof}
Let $\mathcal A$ denote the subset of $0<t < T$
such that $D_{\mathbf R}(\s_0+it)$ 
does not contain a zero or pole of the 
Riemann zeta-function.
We can partition
$\mathcal A$ into disjoint intervals,
$\mathcal A_1, \ldots, \mathcal A_K$, with
endpoints $a_j$ and $b_j$, so that $0 \leq b_j-a_j \leq 1$. Moreover, by \eqref{zero density}
we can choose the partition so that $K \ll T$. %

Next note that $\mathcal M_0 \subset \mathcal A = \bigcup_{n=1}^K \mathcal A_n$. Applying H\"older's 
inequality yields
\be\label{beginning a0}
\begin{split}
&\intl_{\mathcal M_0}  \sum_{|\vr'-s_0| \leq R-\delta}  |\log |\s+it-\vr'|| \, dt\\
& \qquad \qquad \ll   \Big( \meas\big(\mathcal M_0\big) \Big)^{3/4} 
\bigg(\sum_{n=1}^{K} \int_{a_n}^{b_n} \Big(\sum_{|\vr'-s_0| \leq R-\delta} |\log |\s+it-\vr'|| \Big)^{4} \, dt\bigg)^{1/4}.
\end{split}
\ee
 
 Now consider
\bes
\int_{a_n}^{b_n} \Big( \sum_{|\varrho'-s_0| \leq R-\delta} |\log |\s+it-\varrho'|| \Big)^{4} dt.
\ees
To estimate this we begin by covering the line segment 
$\mathcal S_n=\{\s_0-(R-\delta)+it: a_n \leq t \leq b_n \}$, with disks of the 
form $D_{R}(\s_0+it)$, with $a_n \leq t \leq b_n$.  We first place a disk
of radius $R$ centered at $\s_0+ia_n$ and 
another disk of radius $R$ centered $\s_0+ib_n$. If the
segment is covered, then we are done. If not, we note that both
$D_{R}(\s_0+ia_n)$ and $D_{R}(\s_0+ib_n)$ contain portions of $\mathcal S_n$ 
of length at least  
\[
 \sqrt{R^2-(R-\delta)^2}\geq\sqrt{\delta}.
\]
To cover the remaining portion of $\mathcal S_n$ 
we use the disks $D_{R}(\s_0+i(a_n+m\sqrt{\delta}))$, $m=1,2,\ldots, \lfloor
(b_n-a_n)/\sqrt{\delta} \rfloor$.  Hence, we have 
covered the line segment $\mathcal S_n$ with $\ll |b_n-a_n|/\sqrt{\delta}+1$ disks of the form
$D_{R}(\s_0+it)$, where $a_n \leq t \leq b_n$. 

Let $\mathcal D_n$ be the union of these disks.
Then we have
\bes
\begin{split}
\bigcup_{a_n \leq t\leq b_n} D_{R-\delta}&(\s_0+it)
\subset   \bigg(  D_{R}(\s_0+ia_n) \bigcup D_{R}(\s_0+ib_n) \\
 & \bigcup \{z: \s_0-(R-\delta) \leq \Re(z) \leq \s_0+R-\delta, \,\, a_n \leq \Im(z) \leq b_n \}    \bigg)\\
   \subset &\mathcal{D}_n. 
\end{split}
\ees
It follows that for every $a_n \leq t\leq b_n$ if $|\vr'-s_0| \leq R-\delta$  then $\vr' \in \mathcal D_n$. 
Hence, for $a_n \leq t\leq b_n$, we see that
\[
\sum_{|\vr'-s_0| \leq R-\delta} |\log |\sigma+it-\vr'|| \leq \sum_{\vr' \in \mathcal D_n} |\log |\sigma+it-\vr'||.
\]
Thus,
\be \label{middle e0}
\begin{split}
& \bigg(\sum_{n=1}^{K} \int_{a_n}^{b_n}   \bigg(\sum_{|\vr'-s_0|  \leq R-\delta} |\log |\sigma+it-\vr'||  \bigg)^{4} \,  dt\bigg)^{1/4} \\
& \qquad \qquad \qquad \qquad \qquad\qquad \leq 
\bigg(\sum_{n=1}^{K} \int_{a_n}^{b_n}  \bigg( \sum_{\vr' \in \mathcal D_n} |\log |\sigma+it-\vr'||  \bigg)^{4}\, dt \bigg)^{1/4}.
\end{split}
\ee
By Minkowski's inequality we have uniformly in $n$ that
\be \label{321}
\begin{split}
\bigg(\int_{a_n}^{b_n}  \bigg( \sum_{\vr' \in \mathcal D_n} |\log |\s+it-\vr'||  \bigg)^{4}\, dt \bigg)^{1/4} \ll&
\sum_{\vr' \in \mathcal D_n} \(\int_{a_n}^{b_n}   |\log |\s+it-\vr'||^{4}\, dt \)^{1/4} \\
\ll& \sum_{\vr' \in \mathcal D_n} 1.
\end{split}
\ee

Recall that the set $\mathcal D_n$ is the union of $\ll 1/\sqrt{\delta}$ disks.  Letting
$R^*=(\mathbf R+R)/2$ and applying \eqref{zero bd}, we
see that each one
has $\ll \delta^{-1} \log M_t(R^*) \ll\delta^{-1} \log \log T $ zeros. Here
$M_t(R^*)$ is estimated in the same manner as in \eqref{bd for m}.
Therefore,  the total number of zeros in $\mathcal D_n$ is $\ll \delta^{-3/2} \log \log T$,
uniformly in $n$.
By this, \eqref{beginning a0}, \eqref{middle e0}, and \eqref{321}
we see that
\bes
\intl_{\mathcal M_0} \sum_{|\vr'-s_0| \leq R-\delta} |\log |\s+it-\vr'|| \, dt \ll \Big( \meas\big(\mathcal M_0\big) \Big)^{3/4} T^{1/4} \delta^{-3/2}  \log \log T.
\ees
\end{proof}
 
\begin{lemma} \label{I0 lem} Let $\s=1/2+\psi(T)/\log T$.
For $\log \log T(\log T)^{2/3}\leq \psi(T) \leq \log T/(10 \log \log T)$,
we have
\bes 
I_0 \ll  T \frac{(\log \log T)^3 (\log T)^{3/2}}{\psi(T)^{9/4}}.
\ees

\end{lemma}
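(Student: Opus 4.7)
The plan is to split $I_0$ as $I_0 = \int_{\mathcal{M}_0} + \int_{\mathcal{E}_0}$ and invoke the lemmas already established. The integral over $\mathcal{E}_0$ is handled directly by Lemma \ref{b0 bd}, which gives $O(T/\log^2 T)$; one checks that in the range $\log\log T \cdot (\log T)^{2/3} \leq \psi(T) \leq \log T/(10 \log \log T)$ this is already dominated by the target bound $T (\log\log T)^3 (\log T)^{3/2}/\psi(T)^{9/4}$. So the real work is estimating the integral over $\mathcal{M}_0$.

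For $\mathcal{M}_0$, the combination of Lemma \ref{m0 first bd} and Lemma \ref{m0 second bd} already gives
\bes
\intl_{\mathcal M_0} \log \AB \zpz(\s+it)V^{-1/2}\AB \, dt
\ll \frac{\log(1/\delta)}{\delta^2}\meas(\mathcal M_0)\log\log T
+ \meas(\mathcal M_0)^{3/4} T^{1/4} \delta^{-3/2}\log\log T,
\ees
with $\delta = (\s-1/2)/8 \asymp \psi(T)/\log T$. The one remaining ingredient is an upper bound on $\meas(\mathcal{M}_0)$, and this is the crux. The key observation is that $\mathcal{M}_0 \subset S_0$, so I would apply the small-measure estimate \eqref{small measure} from Theorem A to obtain $\meas(\mathcal{M}_0) \leq \meas(S_0) \ll T\varepsilon^2$. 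In the assumed range the first branch of the max defining $\varepsilon$ in \eqref{e def} is negligible (it decays like $e^{-1/(2\s-1)}$), so $\varepsilon^2 \asymp \log\psi(T)/\psi(T) \ll \log\log T / \psi(T)$, giving $\meas(\mathcal M_0) \ll T \log\log T / \psi(T)$.

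Substituting this back, the first term becomes
\bes
\frac{\log(1/\delta)}{\delta^2}\meas(\mathcal M_0)\log\log T
\ll \frac{(\log\log T)^2 (\log T)^2}{\psi(T)^2}\cdot\frac{T\log\log T}{\psi(T)}
= T\,\frac{(\log\log T)^3 (\log T)^2}{\psi(T)^3},
\ees
and using $\psi(T) \geq \log\log T\cdot(\log T)^{2/3}$ (so that $(\log T)^{1/2}/\psi(T)^{3/4} \ll 1$), this is bounded by $T(\log\log T)^3 (\log T)^{3/2}/\psi(T)^{9/4}$. The second term becomes
\bes
\meas(\mathcal M_0)^{3/4} T^{1/4} \delta^{-3/2}\log\log T
\ll T\,\frac{(\log\log T)^{7/4}(\log T)^{3/2}}{\psi(T)^{9/4}},
\ees
which is comfortably within the target.

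The main technical hurdle, conceptually, is recognizing that $\meas(\mathcal{M}_0)$ should be controlled by the Selberg-style distributional input \eqref{small measure}, rather than by a crude geometric estimate. Once this link is made, the two-term decomposition from Lemma \ref{m0 first bd} and the $L^4$ argument encoded in Lemma \ref{m0 second bd} mesh perfectly with the choice $\delta \asymp \psi(T)/\log T$ to produce the common exponent $9/4$ in $\psi(T)$; verifying that the resulting estimates combine coherently across the two terms and absorb the $\mathcal{E}_0$ contribution is essentially a matter of tracking powers of $\log T$, $\log\log T$ and $\psi(T)$ under the hypothesis on $\psi(T)$.
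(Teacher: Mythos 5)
Your proposal is correct and follows essentially the same route as the paper: split off $\mathcal E_0$ via Lemma \ref{b0 bd}, combine Lemmas \ref{m0 first bd} and \ref{m0 second bd} for $\mathcal M_0$, and bound $\meas(\mathcal M_0)\ll T\varepsilon^2$ using \eqref{small measure}, with $\delta\asymp\psi(T)/\log T$. The paper merely packages the two resulting terms as $T(\varepsilon/\delta)^{3/2}(\log\log T)^2$ using $\varepsilon/\delta\ll 1$, whereas you verify each term against the target separately; the arithmetic agrees.
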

\begin{proof}
By Lemma \ref{m0 first bd} and Lemma \ref{m0 second bd} we have that
\bes
\begin{split}
\intl_{\mathcal{M}_0} \, \log \AB \zpz(\s+it) V^{-1/2}\AB \, dt \ll &
 \Big( \meas\big(\mathcal M_0\big) \Big)^{3/4} T^{1/4} \delta^{-3/2}  \log \log T\\
&+\meas\(\mathcal M_0\) \delta^{-2} \log (1/\delta) \log \log T.
\end{split}
\ees
Using \eqref{small measure} we have $\meas\big(\mathcal M_0\big) \ll T \varepsilon^2$. Also, note that for $\psi(T)$ satisfying the hypotheses of the lemma we have $\varepsilon \delta^{-1} \ll 1$. Thus, for $\log \log T (\log T)^{2/3} \leq \psi(T) \leq \log T/(10 \log \log T)$
\bes
\begin{split}
\intl_{\mathcal{M}_0} \, \log \AB \zpz(\s+it) V^{-1/2}\AB \, dt \ll &  T \Big( \frac{\varepsilon}{\delta}\Big)^{3/2} (\log \log T)^2 \\
\ll & T \frac{(\log \log T)^3 (\log T)^{3/2}}{\psi(T)^{9/4}}.  
\end{split}
\ees
The lemma follows from this and Lemma \ref{b0 bd}.
\end{proof}
 
 \subsection{Estimation of $I_M$ and the Proof of Theorem~\ref{main est}}
 
 We first prove
 \begin{lemma}\label{I_M lemma} Let $(\log T)^{-1/3} \leq 2\s-1 \leq 1/(10 \log \log T)$ and let 
 \[
\bOmega= e^{-10} \min\big(V^{3/2}, (\psi(T)/ \log \psi(T))^{1/2}\big).  
 \]
 Then
 \[
 I_M= (\log 2-\g)\frac{T}{2}+O \bigg(T \frac{(\log \log T)^2}{\bOmega} \bigg),
 \]
 where $\g=0.57721566...$ is Euler's constant.
 \end{lemma}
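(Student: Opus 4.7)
The idea is to use the change of variables formula to express $I_M$ as a Riemann--Stieltjes integral against the distribution function $\Psi_T$, apply Theorem~A to replace $\Psi_T$ by its Gaussian model, evaluate the resulting integral explicitly, and bound the error contributions. Specifically,
\[
I_M = T \int_{[\varepsilon, \mathcal N]} \log r \, d\Psi_T(r),
\]
and from the definitions of $\varepsilon$ in \eqref{e def} and of $\bOmega$ one checks that $\varepsilon \bOmega \geq 1$, so Theorem~A applies uniformly on $[\varepsilon, \mathcal N]$. Writing $\Psi_T(r) = (1-e^{-r^2/2}) + E(r)$ with $|E(r)| \ll (r^2+r)/\bOmega$ splits $I_M$ as a Gaussian main-term integral plus an error integral.

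For the main term I would extend the range of integration to $(0,\infty)$ and evaluate
\[
\int_0^\infty \log r \cdot r e^{-r^2/2}\, dr = \tfrac{1}{2}(\log 2 - \g),
\]
via the substitution $u = r^2/2$ together with the classical identity $\int_0^\infty \log u \cdot e^{-u}\, du = -\g$. This produces the claimed leading term $T(\log 2 - \g)/2$. The truncation errors introduced by extending the range are $O(T\varepsilon^2|\log\varepsilon|)$ at the lower end and exponentially small in $\mathcal N^2$ at the upper end, and both are absorbed into $T\mathcal L_2^2/\bOmega$ after using $\varepsilon \ll (\log\psi/\psi)^{1/2}$ and $\bOmega \leq (\psi/\log\psi)^{1/2}$.

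For the error integral $T\int_\varepsilon^{\mathcal N} \log r \, dE(r)$ I would integrate by parts, producing the boundary contributions $T[E(\mathcal N)\log\mathcal N - E(\varepsilon)\log\varepsilon]$ together with the bulk term $-T\int_\varepsilon^{\mathcal N} E(r)/r \, dr$. The boundary term at $\mathcal N$ is negligible because the analysis of $\meas S_\infty$ in the proof of Lemma~\ref{I infty bd} shows that $1 - \Psi_T(\mathcal N) \ll 1/\log^{12} T$, which forces $|E(\mathcal N)|$ to be tiny; the boundary term at $\varepsilon$ is estimated directly from Theorem~A. For the remaining integral I would split at a threshold $r_0$: on $[\varepsilon, r_0]$ the Theorem~A bound yields $|E(r)|/r \ll (r+1)/\bOmega$, contributing $O(r_0^2/\bOmega)$, while on $[r_0, \mathcal N]$ we combine $|E(r)| \leq (1-\Psi_T(r)) + e^{-r^2/2}$ with the second moment estimate $\int_0^T|\zpz(\s+it)|^2\, dt \ll TV$ and Markov's inequality to gain the needed decay.

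\textbf{Main obstacle.} The delicate point is the middle-to-large range of $r$, where Theorem~A's bound $(r^2+r)/\bOmega$ is no longer useful but $\Psi_T(r)$ is not yet close enough to $1$ for the tail to be entirely negligible. Obtaining the final error of the advertised form $T\mathcal L_2^2/\bOmega$ requires supplementing Theorem~A with sharper tail bounds for $|\zpz(\s+it)|/\sqrt{V}$ (for instance through higher moments, or by re-running the zero-density argument of Lemma~\ref{I infty bd} at intermediate thresholds) and choosing the split point $r_0$ carefully, so that the contributions from the two regimes balance against the prefactor $\mathcal L_2^2$.
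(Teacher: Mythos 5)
Your overall strategy (change of variables to a Stieltjes integral against $\Psi_T$, integration by parts, Gaussian main term $\int_0^\infty r\log r\,e^{-r^2/2}\,dr=\tfrac12(\log 2-\g)$, separate treatment of the lower endpoint via \eqref{small measure}) is the same as the paper's. But the point you flag as the "main obstacle" — the range of $r$ where $(r^2+r)/\bOmega$ is no longer small but $\Psi_T(r)$ is not yet provably close to $1$ — is a genuine gap in your argument, and your proposed fixes would not close it. The second-moment/Markov route gives only $\Phi_T(r)\ll r^{-2}$, so the tail integral $\int_{r_0}^{\mathcal N}\Phi_T(r)\,dr/r$ would be $\gg r_0^{-2}$, far larger than the target $(\log\log T)^2/\bOmega$ (which can be as small as about $(\log\log T)^{5/2}(\log T)^{-1/2}$); moreover an unconditional bound $\int_0^T|\zpz(\s+it)|^2\,dt\ll TV$ is itself doubtful because of possible zeros of $\z$ near the line $\Re s=\s$. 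The zero-density argument of Lemma~\ref{I infty bd} only controls the set where $|\zpz|\gg\log^2T$, i.e.\ the very top threshold $\mathcal N$, not intermediate ones.

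The missing idea is elementary: $\Phi_T(r)=1-\Psi_T(r)$ is nonincreasing in $r$. Applying \eqref{main measure} at the single point $r_0=\sqrt{10\log\log T}$ (where it is still in force and its error is $\ll\log\log T/\bOmega$, while the main term $e^{-r_0^2/2}=(\log T)^{-5}$ is negligible) gives $\Phi_T(r)\le\Phi_T(r_0)\ll\log\log T/\bOmega$ for \emph{all} $r\ge r_0$. Since $\log(\mathcal N/r_0)\ll\log\log T$, the whole contribution of $[r_0,\mathcal N]$ to $\int\Phi_T(r)\,dr/r$ is $\ll(\log\log T)^2/\bOmega$, and the boundary term $\Psi_T(\mathcal N)\log\mathcal N=\log\mathcal N+O((\log\log T)^2/\bOmega)$ follows from the same bound. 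No higher moments or additional zero-density input are needed. With this observation inserted in place of your tail argument, the rest of your plan goes through and matches the paper's proof.
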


   \begin{proof} 
We begin by setting 
 \[
 S_M= \bigg\{t \in (0, T) : \varepsilon 
\leq \AB \zpz(\s+it) V^{-1/2} \AB \leq \mathcal N  \bigg\},
 \]
 where $\varepsilon$ is defined in \eqref{e def} 
 and $\mathcal N$ is defined in \eqref{def N}.
By  definition,  
\[
 I_M= \intl_{S_M} \log \AB \zpz(\s+it) V^{-1/2} \AB dt.
 \]
Let
\[
\Psi_T(r)=\frac1T  \, \, \meas\bigg\{t \in [0, T] : 
\AB \frac{\z'}{\z}(\s+it)\AB\leq \sqrt{V}r  \bigg\}
\]
and
\[
\Phi_T(r)=\frac1T  \, \, \meas\bigg\{t \in [0, T] : 
\AB \frac{\z'}{\z}(\s+it)\AB > \sqrt{V}r  \bigg\}.
\]
Note that $\Psi_T(r)$ is increasing so it is of bounded variation.
By the change of variables formula (see ~\cite{Dur} Theorem 1.6.9)
we have
\[
\frac1T \intl_{S_M} \log \AB \zpz(\s+it) V^{-1/2} \AB \, dt=\int_{[\varepsilon, \mathcal N]} \log r \, \, d\Psi_T(r),
\]
here the integral is a Lebesgue-Stieltjes integral.

To estimate the integral we integrate by parts (see Section 6.4 of ~\cite{Burk}) to obtain
\bes
\frac1T \cdot I_M=\Psi_T(\mathcal N^+)\log \mathcal N-\Psi_T(\varepsilon^-)\log \varepsilon -\int_{[\epsilon,\mathcal N]}  \Psi_T(r) \frac{dr}{r},
\ees
where $\Psi_T(\mathcal N^+)$ is the limit of $\Psi_T(r)$ as $r$ approaches $\mathcal N$ from the right
and $\Psi_T(\varepsilon^-)$ is the limit of $\Psi_T(r)$ as $r$ approaches $\varepsilon$ from the left.
Note that since $\Psi_T(r)$ is continuous from the right $\Psi_T(\mathcal N^+)=\Psi_T(\mathcal N)$
and by \eqref{small measure} $\Psi_T(\varepsilon^-) \leq \Psi_T(\varepsilon) \ll \varepsilon^2$.
Also since $\Psi_T(r)$ is bounded and increasing it 
has at most countably many discontinuities. Hence, $\Psi_T(r)/r$
is Riemann integrable on $[\varepsilon, \mathcal N]$.
By these observations we have
\be \label{int by parts formula}
\frac1T \cdot I_M=\Psi_T(\mathcal N)\log \mathcal N -\int_{\epsilon}^{\mathcal N}  \Psi_T(r) \frac{dr}{r}+O\big(\varepsilon^2 \log (1/\varepsilon) \big),
\ee
where the integral on the right-hand side is a Riemann integral.

Next, note that $\Psi_T(r)=1-\Phi_T(r)$
and $\Phi_T(r)$ is a decreasing function in $r$. So by \eqref{main measure}
we have for $r \geq \sqrt{10 \log \log T}$  
that
\bes
\begin{split}
\Phi_T(r) \leq \Phi_T(\sqrt{10 \log \log T})
=& \exp(-5\log \log T)+O( (\log \log T)/\bOmega) \\
\ll& (\log \log T)/\bOmega.
\end{split}
\ees
Thus, for $r \geq \sqrt{10 \log \log T}$ 
\be \label{bound for psi}
\Psi_T(r)=1+O\bigg(\frac{\log \log T}{\bOmega} \bigg).
\ee
Note $\mathcal N > \sqrt{10 \log \log T}$,
so applying this estimate in \eqref{int by parts formula}
we see that
\be \label{after int by parts}
\frac1T \cdot I_M=\log \mathcal N-\int_{\varepsilon}^{\mathcal N}  \Psi_T(r) \frac{dr}{r}
+O\bigg(\frac{(\log \log T)^2}{\bOmega}\bigg)+O(\log (1/\varepsilon)\varepsilon^2).
\ee

Now write
\[
\log \mathcal N=\int_{1}^{\mathcal N} \, \frac{dr}{r}
\]
and note that
\[
\int_{\varepsilon}^{\mathcal N}  \Psi_T(r) \frac{dr}{r}
=\int_{1}^{\mathcal N} (1- \Phi_T(r)) \frac{dr}{r}
+\int_{\varepsilon}^{1}  \Psi_T(r) \frac{dr}{r}.
\]Using these observations in \eqref{after int by parts} we have
\be \label{all together}
\frac1T \cdot I_M=\int_1^{\mathcal N} \Phi_T(r) \, \frac{dr}{r}-\int_{\varepsilon}^{1} \Psi_T(r) \, \frac{dr}{r}+O \bigg(\frac{(\log \log T)^2}{\bOmega}+\log (1/\varepsilon)\varepsilon^2  \bigg).
\ee

By \eqref{main measure} and \eqref{bound for psi} we have
\[
\int_1^{\mathcal N} \, \Phi_T(r) \frac{dr}{r}=\int_1^{\sqrt{10 \log \log T}} \, \bigg(e^{-r^2/2}+O\bigg(\frac{r^2}{\bOmega}\bigg)\bigg) \frac{dr}{r}
+\int_{\sqrt{10 \log \log T}}^{\mathcal N} \, O\bigg(\frac{\log \log T}{\bOmega}\bigg) \frac{dr}{r}.
\]
Hence,
\[
\int_1^{\mathcal N} \, \Phi_T(r) \frac{dr}{r}=\int_1^{\infty} e^{-r^2/2} \frac{dr}{r}+O \bigg(\frac{(\log \log T)^2}{\bOmega}  \bigg).
\]
By \eqref{main measure} and \eqref{small measure}, it also follows that
\[
\int_{\varepsilon}^{1} \, \Psi_T(r) \frac{dr}{r}=\int_0^{1} \,(1- e^{-r^2/2}) \frac{dr}{r}+O(1/\bOmega). 
\]

Finally, integration by parts yields
\[
\int_1^{\infty} \, e^{-r^2/2} \frac{dr}{r}-\int_0^{1} \,(1- e^{-r^2/2}) \frac{dr}{r}=\int_0^{\infty} r \log r \, e^{-r^2/2} dr =\tfrac12(\log 2-\g),
\]
where $\g=0.57721566...$ denotes Euler's constant,
and the last calculation was done using Mathematica. 
To complete the  proof of the lemma, we insert these estimates into 
\eqref{all together} and note that $\log (1/\varepsilon) \varepsilon^{2} \ll
(\log \log T)^2/\bOmega$ for $\s$ satisfying the hypotheses of the lemma.
\end{proof}

\begin{proof}[Proof of Theorem \ref{main est}]
Recall that
\[
\int_0^T \log \AB \zpz(\s+it)V^{-1/2} \AB \, dt=
I_M+I_0+I_{\infty}.
\]
By Lemmas ~\ref{I infty bd}, \ref{I0 lem},  and \ref{I_M lemma} we have
\bes
\begin{split}
 \int_0^T \log \AB \zpz(\s+it)V^{-1/2} \AB \, dt
=&\frac{T}{2}(\log 2-\gamma)+O\bigg( T \frac{(\log \log T)^2}{\bOmega}\bigg)\\
&+O\bigg(T \frac{(\log \log T)^3 (\log T)^{3/2}}{\psi(T)^{9/4}}
\bigg)+O(T/\log^2 T).
\end{split}
\ees
Next, note that
  \bes
  \begin{split}
 \int_0^T \log \AB \zpz(\s+it)V^{-1/2} \AB \, dt=&\int_0^T \log \AB \zpz(\s+it) \AB \, dt\\
 &-T \log \Big(\frac{1}{2\s-1} \Big)+\log 2\frac{T}{2}+O(T/V).
 \end{split}
 \ees
The result follows after collecting estimates and then combining and simplifying the error terms.
\end{proof}
 
 \section{The Proof of Theorem \ref{thm 1}} \label{section 4}
 
Applying Littlewood's Lemma (see Titchmarsh ~\cite{T} equation 9.9.2) to $(-2^s/\log 2)\z'(s)$ 
and taking imaginary parts, we have
\be \notag
\begin{split}
2\pi \sum_{\substack{1< \g' \leq T \\  \beta'>\s}} (\beta'-\s)&=
 \int_1^T \! \log \AB \frac{-2^{\s+it}}{\log 2}\z'(\s+it) \AB \, dt
-\int_1^T \! \log \AB \frac{-2^{10+it}}{\log 2}\z'(10+it) \AB \, dt \\
&  +\int_{\s}^{10}  \! \arg \Big( \frac{-2^{\alpha+iT}}{\log 2}\z'(\alpha+iT) \Big) \, d\alpha- 
\int_{\s}^{10} \arg \Big( \frac{-2^{\alpha+i}}{\log 2}\z'(\alpha+i) \Big)  \, d\alpha.
\end{split}
\ee
Clearly, the last integral is $\ll 1$. To estimate the third integral we note that
it follows from
Berndt ~\cite{Be} that $\arg ((-2^{\s+iT}/\log 2)\z'(\s+iT)) \ll \log T$. By
a standard argument using contour integration 
we can show that the second integral is $O(1)$. Thus, we get that
\be \label{zeros of zeta prime}
2\pi \sum_{\substack{ \beta'>\s \\ 0 < \g' \leq T}} (\beta'-\s)=  \int_0^T \! \log \ab \z'(\s+it) \ab \, dt+
T \log \frac{2^{\s}}{\log 2}+O(\log T).
\ee

From Theorem 9.15 of Titchmarsh ~\cite{T} it follows that
\be \label{zeros of zeta}
2\pi \sum_{\substack{ 0 < \g \leq T \\ \beta >\s}} (\beta-\s)=\int_0^T \! \log \ab \z(\s+it) \ab \, dt+O(\log T).
\ee
Now, by \eqref{zero density} and our assumption 
that $\s>1/2+(\log T)^{-1/3}$,   the sum on the left-hand side of
the above equation is
\[
\ll \sum_{\substack{ 0 < \g \leq T \\ \beta>\s}} 1 \ll T^{1-(\s-1/2)/4} \log T \ll T/\log^{2} T.
\]
Therefore, differencing equations \eqref{zeros of zeta prime}
and \eqref{zeros of zeta} gives
\[
2\pi \sum_{\substack{ 0< \g' \leq T \\  \beta'>\s}} (\beta'-\s)=\int_0^T \! \log \AB \zpz(\s+it) \AB \, dt
+T \log \frac{2^{\s}}{\log 2}+O(T/\log^2 T).
\]
Theorem \ref{thm 1} now follows from Theorem \ref{main est}.


\section{The Proof of Theorem \ref{thm 2.1}} \label{section 5}

By Theorem \ref{thm 1}  we have 
\[
\sum_{\substack{0< \g' \leq T \\  \beta'>\s}} (\beta'-\s)=\frac{T}{2\pi}\log \Big( \frac{1}{2\s-1}\Big)
+\Big(\log \frac{2^{\s}}{\log 2}-\frac{\g}{2}\Big)\frac{T}{2\pi} + O(E((2\s-1)\log T)).
\]
Throughout this section we write this as 
\[
L(\s)=R(\s).
\]
Before proving Theorem \ref{thm 2.1}, we prove

\begin{lemma} \label{deriv lemma} Let $ \log \log T/(\log T)^{1/3} \leq 2\s-1 \leq (20 \log \log T)^{-1}$
and $0<\D<(2\s-1)/4$. Then  
\bes
\begin{split}
\frac{R(\s)-R(\s+\D)}{\D} = &
\frac{T}{2 \pi(\s-\tfrac12)}- \frac{T}{2\pi}\log 2\\
&+O\( T \frac{\Delta}{(2\s-1)^2}   + TE((2\s-1)\log T)/\D\).
\end{split}
\ees
\end{lemma}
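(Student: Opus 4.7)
The strategy is simply to apply Theorem~\ref{thm 1} at both $\sigma$ and $\sigma+\Delta$, subtract, and Taylor-expand the resulting logarithm. Writing out $R(\sigma) - R(\sigma+\Delta)$, the constants $-\gamma/2$ and $-\log\log 2$ drop out, and the linear-in-$\sigma$ piece $\sigma\log 2$ contributes exactly $-\frac{T}{2\pi}\Delta \log 2$, while the logarithmic piece becomes
\[
\frac{T}{2\pi}\log\frac{2\sigma-1+2\Delta}{2\sigma-1}=\frac{T}{2\pi}\log\!\left(1+\frac{2\Delta}{2\sigma-1}\right).
\]

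Since $0<\Delta<(2\sigma-1)/4$ implies $2\Delta/(2\sigma-1)<1/2$, the expansion $\log(1+x)=x+O(x^2)$ is valid and gives
\[
\frac{T}{2\pi}\log\!\left(1+\frac{2\Delta}{2\sigma-1}\right)=\frac{T\Delta}{\pi(2\sigma-1)}+O\!\left(\frac{T\Delta^2}{(2\sigma-1)^2}\right).
\]
Dividing through by $\Delta$ converts this into the claimed main term $T/(2\pi(\sigma-\tfrac12))$ together with the error $O(T\Delta/(2\sigma-1)^2)$, and the $-\log 2$ piece becomes $-\frac{T}{2\pi}\log 2$, matching the statement exactly.

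The only non-trivial point is handling the error contribution $O(TE(\psi(T))) - O(TE(\psi(T)+2\Delta\log T))$. Since $\Delta<(2\sigma-1)/4$ gives $2\Delta\log T<\psi(T)/2$, the shifted argument satisfies $\psi(T)+2\Delta\log T \in [\psi(T),\tfrac{3}{2}\psi(T)]$. In either of the two regimes defining $E(\psi)$ (namely $\psi^{-9/4}$-decay or $\psi^2$-growth), multiplying $\psi$ by a constant $\le 3/2$ changes $E$ by at most a bounded factor; and at the transition point $\psi=\mathcal L_2^{4/17}(\log T)^{14/17}$ the two formulas agree, so $E$ is comparable throughout the small shift. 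Hence $E(\psi(T)+2\Delta\log T)\ll E(\psi(T))$, and after division by $\Delta$ the combined error is $O(TE(\psi(T))/\Delta)$, which is precisely the term appearing in the statement.

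I expect no real obstacle: the entire argument is a one-line Taylor expansion plus monotonicity-type bookkeeping on $E$. The only thing requiring minor care is the verification that the shift $\psi\mapsto\psi+2\Delta\log T$ does not inflate $E$ by more than a constant across the piecewise definition, which follows from the observation above that the two pieces of $E$ match at the crossover point.
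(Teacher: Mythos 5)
Your proposal is correct and follows essentially the same route as the paper: difference the formula from Theorem~\ref{thm 1}, Taylor-expand $\log(1+2\Delta/(2\sigma-1))$ to two terms, and absorb $E((2(\sigma+\Delta)-1)\log T)$ into $E((2\sigma-1)\log T)$ using $\Delta<(2\sigma-1)/4$. Your explicit check that the two branches of $E$ agree at the crossover point $\psi=\mathcal L_2^{4/17}(\log T)^{14/17}$ is in fact slightly more careful than the paper, which simply asserts the comparability.
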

\begin{proof}
First we note that
by definition
\bes 
\begin{split}
\frac{R(\s)-R(\s+\D)}{\D}=& \frac{T}{2 \pi  \D } \log \(\frac{2(\s+\D)-1}{2\s-1}\)
-\frac{T}{2\pi} \log 2\\
&+ O(T(E((2\s-1)\log T)+E((2(\s+\D)-1)\log T))/\D).
\end{split}
\ees  
Since $\D <(2\s-1)/4$,
we have that $E((2\s-1)\log T)\approx E((2(\s + \D)-1)\log T)$.  
Next, note that we may approximate the logarithm in the above equation by the first two terms of its Taylor series. 
Thus, the right-hand side equals
\bes 
\begin{split}
\frac{T}{2 \pi \Delta} \cdot \frac{2\Delta}{2\s-1}+O\( T \frac{\Delta}{(2\s-1)^2} \) -\frac{T}{2\pi} \log 2+ O(TE((2\s-1)\log T)/\D),
\end{split}
\ees
as desired.
\end{proof}


\begin{proof}[Proof of Theorem \ref{thm 2.1}]
Again, observe that
\[
L(\sigma)=\int_{\sigma}^{\infty} \sum_{\substack{0<
\gamma' \leq T \\ \beta'>u}} 1 \, du.
\]
Thus,
\[
\frac{R(\sigma)-R(\sigma+\D)}{\D}=\frac{L(\sigma)-L(\sigma+\Delta)}{\Delta}=\frac{1}{\Delta}
\int_{\sigma}^{\sigma+\Delta} \sum_{\substack{0<
\gamma' \leq T \\ \beta'>u}} 1 \, du.
\]
So that
\[
\sum_{\substack{0<
\gamma' \leq T \\ \beta'>\sigma+\Delta}} 1 \leq \frac{R(\sigma)-R(\sigma+\D)}{\D} \leq \sum_{\substack{0<
\gamma' \leq T \\ \beta'>\sigma}} 1.
\]
Similarly,
\[
\sum_{\substack{0<
\gamma' \leq T \\ \beta'>\sigma}} 1 \leq \frac{R(\sigma-\Delta)-R(\sigma)}{\D} \leq \sum_{\substack{0<
\gamma' \leq T \\ \beta'>\sigma-\Delta}} 1.
\]
Thus,
\be \label{above below}
\frac{R(\sigma)-R(\sigma+\D)}{\D} \leq \sum_{\substack{0<
\gamma' \leq T \\ \beta'>\sigma}} 1 \leq
\frac{R(\sigma-\Delta)-R(\sigma)}{\D}.
\ee

By Lemma \ref{deriv lemma}
we have for $0<\D<(2\sigma-1)/4$ and $ 2\log \log T/(\log T)^{1/3} \leq 2\sigma-1 \leq (20 \log \log T)^{-1}$
that
\bes
\begin{split}
\frac{R(\sigma)-R(\sigma+\D)}{\D} = &
\frac{T}{2 \pi(\sigma-\tfrac12)}- \frac{T}{2\pi}\log 2\\
&+O\( T \frac{\Delta}{(2\sigma-1)^2}   + TE((2\sigma-1)\log T)/\D\),
\end{split}
\ees
and,
\bes
\begin{split}
\frac{R(\sigma-\Delta)-R(\sigma)}{\D} = &
\frac{T}{2 \pi(\sigma-\tfrac12)}- \frac{T}{2\pi}\log 2\\
&+O\( T \frac{\Delta}{(2\sigma-1)^2}   + TE((2(\sigma-\D)-1)\log T)/\D\).
\end{split}
\ees
Since $0< \D \leq (2\s-1)/4$ we have $E((2\s-1)\log T) \approx E((2(\sigma-\D)-1)\log T)$. Applying
these estimates in \eqref{above below} yields
\[
\sum_{\substack{0<
\gamma' \leq T \\ \beta'>\sigma}} 1=
\frac{T}{2 \pi(\sigma-\tfrac12)}- \frac{T}{2\pi}\log 2
+O\( T \frac{\Delta}{(2\sigma-1)^2}   + TE((2\sigma-1)\log T)/\D\).
\]
To balance the error terms take $\Delta=(2\s-1)\sqrt{E(\psi(T))}$. Noting that the term
$-T/(2\pi) \log 2$ is absorbed by the error term here, the theorem follows.
\end{proof}

\subsection*{Acknowledgments}
This article is from the author's PhD thesis, which was supervised by Prof. Steven Gonek. I would like 
to thank Prof. Gonek for his guidance and support.

\end{document}